\numberwithin{equation}{section}
\newtheorem{Theorem}{Theorem}[section]
\newtheorem*{Theorem*}{Theorem}
\newtheorem{Corollary}[Theorem]{Corollary}
\newtheorem{Lemma}[Theorem]{Lemma}
\newtheorem{Proposition}[Theorem]{Proposition}
\newtheorem{conje}[Theorem]{Conjecture}
\theoremstyle{definition}
\newtheorem{Definition}[Theorem]{Definition}
\newtheorem{Example}[Theorem]{Example}
\newtheorem{Remark}[Theorem]{Remark}
\newtheorem{conve}[Theorem]{Convention}
\newcommand{\NN }{\mathbb{N}}
\newcommand{\CC }{\mathbb{C}}
\newcommand{\ZZ }{\mathbb{Z}}
\newcommand{\id }{\mathrm{id}}
\DeclareMathOperator{\Aut}{Aut}
\DeclareMathOperator{\Hom}{Hom}
\newcommand{\Cc }{\mathcal{C}}
\newcommand{\Wg }{\mathcal{W}}
\newcommand{\rsC }{\mathcal{R}}
\newcommand{\re }{^\mathrm{re}}
\newcommand{\rer }[1]{(R\re)^{#1}}
\DeclareMathOperator{\Obj}{Obj}
\newcommand{\qq }{{\bf q}}
\newcommand{\KK }{\mathbb{K}}
\DeclareMathOperator{\Irr}{Irr}
\newcommand{\del }{\partial^L}
\newcommand{\der }{\partial^R}
\newcommand{\BB }{\mathfrak{B}}
\newcommand{\II }{\mathfrak{I}}
\newcommand{\Kc }{\mathcal{K}}
\DeclareMathOperator{\ad}{ad}
\newcommand{\dn }{d}
\newcommand{\qR }{\tilde R}
\newcommand{\bich }{\chi}
\begin{document}

\newcommand{\arXivNumber}{2204.05720}

\renewcommand{\PaperNumber}{019}

\FirstPageHeading

\ShortArticleName{Higher Braidings of Diagonal Type}

\ArticleName{Higher Braidings of Diagonal Type}

\Author{Michael CUNTZ and Tobias OHRMANN}

\AuthorNameForHeading{M.~Cuntz and T.~Ohrmann}

\Address{Leibniz Universit\"at Hannover,
Institut f\"ur Algebra, Zah\-lentheorie und Diskrete Mathematik,
Fakult\"at f\"ur Mathematik und Physik,
Wel\-fengarten 1,
D-30167 Hannover, Germany}
\Email{\href{mailto:cuntz@math.uni-hannover.de}{cuntz@math.uni-hannover.de}, \href{mailto:tobias@ohrmann.com}{tobias@ohrmann.com}}
\URLaddress{\url{https://www.iazd.uni-hannover.de/cuntz}}

\ArticleDates{Received May 30, 2022, in final form March 27, 2023; Published online April 06, 2023}

\Abstract{Heckenberger introduced the Weyl groupoid of a finite-dimensional Nichols algebra of diagonal type. We replace the matrix of its braiding by a~higher tensor and present a~construction which yields further Weyl groupoids. Abelian cohomology theory gives evidence for the existence of a higher braiding associated to such a tensor.}

\Keywords{Nichols algebra; braiding; Weyl groupoid}

\Classification{17B22; 16T30; 20F55}

\section{Introduction}

To a braided vector space $(V,c)$, i.e., a vector space $V$ and a linear isomorphism
\[
c \colon \ V \otimes V \rightarrow V \otimes V \qquad\text{with} \quad (c\otimes\id)(\id\otimes c)(c\otimes\id)=(\id\otimes c)(c\otimes\id)(\id\otimes c)
\]
one can associate a Hopf algebra $\BB(V) = T(V)/\II(V)$, where the ideal $\II(V)$ is the kernel of the symmetrizer of the braiding. This algebra $\BB(V)$ is called the Nichols algebra of $(V, c)$. For example, given a generalized Cartan matrix $A$ of finite type and a parameter $q\in\CC^\times$, it is well-known that the positive part of the Drinfeld--Jimbo quantum enveloping algebra
$U_q(A)$ is a Nichols algebra. This is a special case of a Nichols algebra of diagonal type.

The classification of finite-dimensional Nichols algebras of diagonal type \cite{p-H-09} was achieved using certain root systems. In classical Lie theory, these are orbits under the action of a Weyl group. For Nichols algebras, Heckenberger \cite{p-H-06} noticed that one has to consider Weyl groupoids, i.e., categories whose morphisms are compositions of reflections acting on a lattice.

Unfortunately, associating a Weyl groupoid to a Nichols algebra appears to be neither a~surjective nor an injective map (up to isomorphisms). At least in the case when the braiding defining the Nichols algebra is of diagonal type, one does not obtain all finite Weyl groupoids as classified in \cite{p-CH10}. To realize a much larger class of Weyl groupoids, Cuntz and Lentner \cite{p-CL-17} constructed Nichols algebras associated to restrictions of crystallographic arrangements. However, these are almost never Nichols algebras of Yetter--Drinfeld modules of finite groups. Moreover, we do not expect to obtain too many further Weyl groupoids of rank two in this way: in rank two, there are infinitely many finite Weyl groupoids, but only a very small finite set is associated to Nichols algebras of diagonal type. Restrictions of Weyl arrangements won't produce enough algebras using the construction by Cuntz and Lentner.

A braiding $c$ of diagonal type is completely determined by a matrix $\qq=(q_{ij})$ (when $c(x_i\otimes x_j) = q_{ij} x_j\otimes x_i$ for a basis $x_1,\dots,x_n$). To understand the action of a reflection $\sigma$ on a Nichols algebra, it is convenient to consider the bicharacter $\chi$ associated to $\qq$. The map $\sigma$ acts on the exponent vectors in $\ZZ^n$ of the generators of the Nichols algebra. But to obtain the new Nichols algebra $\sigma(\BB(V,c))$, one has to apply $\sigma$ on $\chi$. In an additive notation, this turns out to be the action of $\sigma\otimes\sigma$ on $\ZZ^n\otimes \ZZ^n$, see Sections~\ref{add_not} and~\ref{add_not_tensor}.

Conversely, given a Weyl groupoid $\Wg$, the tensor squares of its reflections define the variety of braidings $c$ such that $\Wg$ is the Weyl groupoid of $\BB(V,c)$. In additive notation it is very natural to generalize this machinery to higher tensor powers of reflections.
For example, fourth tensor powers would produce a variety of ``tensors'' $\qq=(q_{i,j,k,l})$.

To compute this variety, we need a rule to determine the reflections associated to such a~tensor. In the classical theory of Nichols algebras of diagonal type, this rule is given by a formula that we call \emph{Rosso's condition}:
the Cartan entry $c_{\ell,j}$ is minus the smallest $m\in\NN_0$ such that
\begin{equation*}
\big(1-q_{\ell,\ell}^{m} q_{\ell,j}q_{j,\ell}\big) \sum_{\nu=0}^{m} q_{\ell,\ell}^\nu = 0.
\end{equation*}
In Section \ref{generalized_Rosso}, we compute in a natural way a generalized Rosso condition for $d$-th tensor powers when $d$ is even.
In general, the Cartan entry $c_{\ell,j}$ is minus the smallest $m$ such that
\begin{equation*}
\left(1-\prod_{\nu=1}^{\dn} q_\nu^{\frac{1}{2}((m+1)^{\nu}-m^{\nu}+(-1)^{\nu+1})}\right)
\sum_{\mu=0}^m \prod_{\nu=1}^d q_{\dn-\nu}^{\mu \frac{1}{2}\left( (m+1)^{\nu-1}+\sum_{k=0}^{\nu-1} (-1)^{\nu-k} m^k \right)}
= 0,
\end{equation*}
where
\[
q_\nu :=\prod_{\substack{i_1,\dots,i_\dn\in\{\ell,j\}\\ |\{k \mid i_k=j\}|=\nu}} q_{i_1,\dots,i_\dn}.
\]
We also obtain a recursion for this condition (Theorem \ref{rosso_rec_d}) which is similar to the formula obtained in the classical case for the Nichols algebra.
We prove that our generalized Rosso condition produces Cartan matrices such that all axioms of a Weyl groupoid are satisfied (Theo\-rem~\ref{wg_from_q}). In examples, we obtain finite Weyl groupoids which do not occur as invariants of Nichols algebras of diagonal type. We conjecture that all finite Weyl groupoids are attained asymptotically when the tensor power (i.e.,~$d$) increases.

In Section \ref{ab_coh}, we give a precise definition of higher braidings in terms of abelian cohomology theory.
For an abelian group $G$, it is well-known that monoidal structures on the category of finite-dimensional $G$-graded vector spaces are (up to braided monoidal equivalence) classified by the third abelian cohomology $H_{ab}^3(G,\KK^\times)$.
If the associator is trivial, the defining abelian $3$-cocycle will simply be a~bicharacter on $G$. In this situation every object naturally becomes a~diagonally braided vector space.
In particular, if we set $G=\ZZ^n$, a braiding on $V=\oplus_{i=1}^n \KK \cdot x_i$ is uniquely determined by a matrix $\qq=(q_{ij})\in (\KK^\times)^{n \times n}$.
Two such braidings are equivalent if the numbers $q_{ii}$ and $q_{ij}q_{ji}$ coincide.
Consistently, we define a $d$-dimensional braiding to be an abelian $(2d-1)$-cocycle. Again, if the (higher) associative structure is trivial this will simply be a $d$-character on $G$. For $G=\ZZ^n$ the $d$-dimensional braiding is then
uniquely determined by a tensor $\qq=(q_{i_1,\dots,i_d})$. In analogy to the classical case we can associate the numbers $q_\nu \in \KK^\times$ (see above) to such a $d$-dimensional diagonal braiding and see that these numbers only depend on the cohomology class of the abelian $(2d-1)$-cocycle.

We also notice that, as in the classical case ($d=2$), the higher \textit{Rosso condition} as defined before only depends on the numbers $q_\nu \in \KK^\times$ and hence only on the cohomology class of the $d$-dimensional braiding $\qq$ (see Example \ref{ex: Rosso condition depends on Cohclass}). Moreover, we conjecture that the numbers $q_\nu \in \KK^\times$ are defined by multi-dimensional symmetric forms $\theta_\lambda \in H^{2d-1}(G,\KK^\times)$ indexed by partitions $\lambda$ of $d$. This is Conjecture \ref{conje: symmetrized cycles are n-forms in arguments} and Remark \ref{rem: n-form}. For the case $d=2$, we obtain the well-known quadratic form $\theta_2$ and symmetric bilinear form $\theta_{(1,1)}$, which define the numbers $q_{ii}$ and $q_{ij}q_{ji}$ in the diagonal case.

The computations in Section \ref{ab_coh} require explicit knowledge of low-dimensional generators and boundaries of the abelian chain complex. In Table \ref{tab:generators of abelian chain complex}, we list these up to $n=6$, as we believe this can be helpful to other people dealing with abelian chain complexes independently from our motivation.

\section{Nichols algebras of diagonal type}

\subsection{Nichols algebras}\label{sec: Nichols algebras}

Let $G$ be an abelian group and $V$ a finite-dimensional representation of the Drinfeld double~$D(G)$ over the field~$\KK$. Since $G$ is abelian, $V$ decomposes into a direct sum of one-dimensional modules%
\[ V = \bigoplus_{i=1}^n V_{g_i,\xi_i}, \]
where $g_1,\dots,g_n\in G$, $\xi_1,\dots,\xi_n\in \hat G=\Irr(G)$. Choose $x_1,\dots,x_n\in V$ such that $V_{g_i,\xi_i}=\langle x_i\rangle$ for all $i=1,\dots,n$.
Then $G$ acts on $V$ via
\[
g\cdot x_i = \xi_i(g) x_i, \qquad g\in G,
\]
and $\hat G$ acts on $V$ via
\[
\xi\cdot x_i = \xi(g_i) x_i, \qquad \xi\in \hat G.
\]
In this section, the relevant information about $V$ is encoded in the matrix
\[ \qq = (q_{i,j})_{i,j} := (\xi_j(g_i))_{i,j}. \]

There are many equivalent definitions of a Nichols algebra. For our purpose, the definition via skew derivations will be the most convenient (cf.~\cite{p-H-06}).

\begin{Definition}
Define (left and right) {\it skew derivations}
$\del_i \colon V^{\otimes k}\rightarrow V^{\otimes k-1}$,
$\der_i \colon V^{\otimes k}\rightarrow V^{\otimes k-1}$,
$i=1,\dots,n$, $k\in\NN_0$ inductively by
\begin{alignat*}{4}
& \del_i(1):=0, \qquad &&\del_i(x_j):=\delta_{i,j}, \qquad&&
\del_i(xy):=\del_i(x)y + (\xi_i\cdot x) \del_i(y),&\\
& \der_i(1):=0, \qquad &&\der_i(x_j):=\delta_{i,j}, \qquad&&
\der_i(xy):=x\der_i(y) + \der_i(x) (g_i \cdot y),&
\end{alignat*}
for $x$, $y$ of degree $\ge 1$.
\end{Definition}

Let $\varepsilon \colon T(V) \rightarrow \KK$ be the homomorphism of algebras with $\varepsilon(v)=0$ for all $v\in V$.

\begin{Definition}
Let $\II(V)$ be the largest ideal among all ideals $I$ of $T(V)$ such that $\varepsilon(I)=0$ and
$\del_i(I)\subset I$ for all $i=1,\dots,n$.
Then $\BB(V):=T(V)/\II(V)$ is called the {\it Nichols algebra $($of diagonal type$)$} of~$V$.
\end{Definition}

Using methods similar to \cite[Lemma~2.2]{p-J-77}, one can prove the following result:

\begin{Proposition}
Let $i\in\{1,\dots,n\}$, $\Kc_i:=\ker \der_i$ and
\[ h_i := \min\left\{ m\ge 1 \mid \sum_{k=0}^{m-1} q_{i,i}^k =0\right\} \cup \{\infty\}. \]
Then
\[ \BB(V) \cong \begin{cases}
\Kc_i\otimes \KK[x_i], & h_i = \infty, \\
\Kc_i\otimes \KK[x_i]/\big(x_i^{h_i}\big), & h_i \in \NN.
\end{cases} \]
as $\ZZ^n$-graded vector spaces over $\KK$.
\end{Proposition}

\subsection{Adjoint action}

Following \cite{p-H-06}, we will construct a (generalized) root system associated to a Nichols algebra of diagonal type.
An essential tool is the adjoint action:

\begin{Definition}With the above notation, for $i=1,\dots,n$ and $y\in T(V)$, let
\[ (\ad x_i)(y) := x_i y - (g_i\cdot y) x_i. \]
\end{Definition}

In \cite[Proposition~1]{p-H-06}, it is shown that the algebra $\Kc_i$ is the subalgebra of $\BB(V)$ generated by the elements $(\ad x_i)^m(x_j)$, $m\ge 0$, $j\ne i$. This implies:

\begin{Proposition}\label{Ki_fg}
Let $i\in\{1,\dots,n\}$. Then $\Kc_i$ is finitely generated if and only if: for all $j\ne i$, there exists $m_{i,j}\ge 0$ such that
\[ (\ad x_i)^{m_{i,j}+1}(x_j) \in \II(V). \]
\end{Proposition}

The following proposition will be the key for our generalization, hence we sketch a proof:

\begin{Proposition}
In the situation of Proposition~{\rm \ref{Ki_fg}}, we have
\begin{equation}\label{rosso}
m_{i,j} = \min \left\{ m\ge 0\mid \sum_{k=0}^m q_{i,i}^k = 0\ \text{or} \ q_{i,i}^{m} q_{i,j}q_{j,i} = 1 \right\}.
\end{equation}
\end{Proposition}
\begin{proof}
Let $y\in T(V)$. Since $g_i \del_i(y) = q_{i,i}^{-1} \del_i(g_i y)$,
\begin{align*}
\del_i((\ad x_i)(y)) &= y+q_{i,i} x_i \del_i(y) - \del_i(g_i y) x_i - (\xi_i g_i y) \\
&= y - (\xi_i g_i y) + q_{i,i} (\ad x_i)\del_i(y).
\end{align*}
In particular, for $y=(\ad x_i)^{k}(x_j)$, $k\ge 0$ we get
\[ \del_i\big((\ad x_i)^{k+1}(x_j)\big) =
\big(1-q_{i,i}^{2k}q_{i,j}q_{j,i}\big) (\ad x_i)^{k}(x_j)
+ q_{i,i} (\ad x_i) \del_i\big((\ad x_i)^{k}(x_j)\big). \]
With
\begin{equation}\label{rosso_rec}
R_0:=1-q_{i,j}q_{j,i}, \qquad
R_k:=
1-q_{i,i}^{2k} q_{i,j}q_{j,i} + q_{i,i} R_{k-1}
\end{equation}
for $k>0$, we obtain
\[ \del_i\big((\ad x_i)^{k}(x_j)\big) = R_{k-1} (\ad x_i)^{k-1}(x_j). \]
Using induction we see that
\begin{equation}\label{Rrosso}
R_k = \big(1-q_{i,i}^{k} q_{i,j}q_{j,i}\big) \sum_{\nu=0}^{k} q_{i,i}^\nu.
\end{equation}
Thus $(\ad x_i)^{k}(x_j)\in\II(V)$ if and only if $R_{k-1}=0$.
\end{proof}

Let us call \eqref{rosso} and \eqref{Rrosso} the {\it Rosso condition} because it goes back to Rosso~\cite{MR1632802}.

\subsection{The Weyl groupoid of a Nichols algebra}

Recall that the Nichols algebra of diagonal type is completely determined by the matrix $\qq$.

\begin{Definition}
The {\it Cartan matrix} $\big(c^\qq_{i,j}\big)_{1\le i,j\le n}$ of $\qq$ is given by the formulas $c^\qq_{i,i}=2$ and~\eqref{rosso}:
\begin{gather}\label{rosso_cartan}
c^\qq_{i,j} = -\min\big\{ m\in \NN_0 \mid 1+q_{ii}+q_{ii}^2+\cdots+q_{ii}^m=0 \ \text{or}\ q_{ii}^m q_{ij}q_{ji}=1 \big\}, \qquad i\ne j.
\end{gather}
\end{Definition}
Notice that depending on $\qq$, some of these entries may not be defined since an $m\in\NN_0$ satisfying one of the two conditions possibly does not exist. In this case, we will say that the Nichols algebra has no corresponding Weyl groupoid.
Proposition~\ref{Ki_fg} ensures that a Cartan matrix exists in the special case that the Nichols algebra is finite dimensional.

\begin{Definition}[{\cite{p-H-06}}]
Let $\ZZ^n = \langle \alpha_1,\dots,\alpha_n\rangle$ and $\alpha_1,\dots,\alpha_n$ be the standard basis.
Let $\ell\in\{1,\dots,n\}$ and $\sigma_\ell\in\Aut(\ZZ^n)$ be the reflection given by
\[ \sigma_\ell(\alpha_j):=\alpha_j - c^\qq_{\ell,j}\alpha_\ell \]
for $\ell=1,\dots,n$. Using $\sigma_\ell$ we obtain a new matrix $\sigma_\ell(\qq)$ via
\begin{gather}\label{reflect_q}
\sigma_\ell(\qq)_{i,j} := \prod_{k,s=1}^n q_{k,s}^{\sigma_\ell(\alpha_i)_k \sigma_\ell(\alpha_j)_s}.
\end{gather}
We can compute a Cartan matrix to $\sigma_\ell(\qq)$ instead of $\qq$ using Rosso's formula and apply another reflection and so on.
This way we obtain a groupoid called the {\it Weyl groupoid} whose objects are the matrix $\qq$ and its images and the morphisms are compositions of the reflections between these matrices.
We recall the precise definition of a Weyl groupoid in the following subsection.

The {\it Dynkin diagram} of $\qq$ is a graph with vertices $1,\dots,n$ and edges $(i,j)$ when $q_{ij}q_{ji}\ne 1$ and labeled by $q_{ij}q_{ji}$.
\end{Definition}

\begin{Example}\label{wg_nichols_example}
Let $n=3$, $\zeta$ be a primitive third root of unity, and
\[ \qq=(q_{ij})_{i,j}=\begin{pmatrix} -1 & \hphantom{-}\zeta & \hphantom{-}\zeta \\ \hphantom{-}1 & -1 & \hphantom{-}\zeta \\\hphantom{-}1 & \hphantom{-}1 & -1 \end{pmatrix}. \]

\begin{figure}[t]\centering
 \makeatletter%
 \providecommand\color[2][]{%
 \errmessage{(Inkscape) Color is used for the text in Inkscape, but the package 'color.sty' is not loaded}%
 \renewcommand\color[2][]{}%
 }%
 \providecommand\transparent[1]{%
 \errmessage{(Inkscape) Transparency is used (non-zero) for the text in Inkscape, but the package 'transparent.sty' is not loaded}%
 \renewcommand\transparent[1]{}%
 }%
 \providecommand\rotatebox[2]{#2}%
 \newcommand*\fsize{\dimexpr\f@size pt\relax}%
 \newcommand*\lineheight[1]{\fontsize{\fsize}{#1\fsize}\selectfont}%
 \ifx\svgwidth\undefined%
 \setlength{\unitlength}{286.85038468bp}%
 \ifx\svgscale\undefined%
 \relax%
 \else%
 \setlength{\unitlength}{\unitlength * \real{\svgscale}}%
 \fi%
 \else%
 \setlength{\unitlength}{\svgwidth}%
 \fi%
 \global\let\svgwidth\undefined%
 \global\let\svgscale\undefined%
 \makeatother%
 \begin{picture}(1,0.86704021)%
 \lineheight{1}%
 \setlength\tabcolsep{0pt}%
 \put(0,0){\includegraphics[width=\unitlength,page=1]{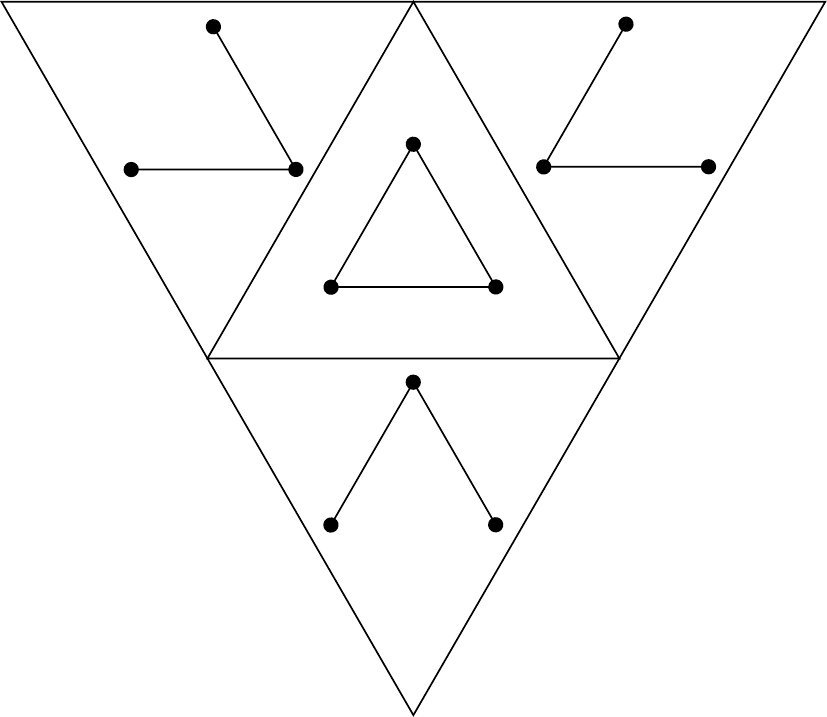}}%
 \put(0.5615138,0.61816383){\color[rgb]{0,0,0}\makebox(0,0)[lt]{\lineheight{1.25}\smash{\begin{tabular}[t]{l}$\zeta$\end{tabular}}}}%
 \put(0.41816848,0.61716841){\color[rgb]{0,0,0}\makebox(0,0)[lt]{\lineheight{1.25}\smash{\begin{tabular}[t]{l}$\zeta$\end{tabular}}}}%
 \put(0.48884572,0.4887548){\color[rgb]{0,0,0}\makebox(0,0)[lt]{\lineheight{1.25}\smash{\begin{tabular}[t]{l}$\zeta$\end{tabular}}}}%
 \put(0.20514137,0.82621371){\color[rgb]{0,0,0}\makebox(0,0)[lt]{\lineheight{1.25}\smash{\begin{tabular}[t]{l}$\zeta$\end{tabular}}}}%
 \put(0.13446415,0.68685018){\color[rgb]{0,0,0}\makebox(0,0)[lt]{\lineheight{1.25}\smash{\begin{tabular}[t]{l}$\zeta$\end{tabular}}}}%
 \put(0.7795182,0.83019549){\color[rgb]{0,0,0}\makebox(0,0)[lt]{\lineheight{1.25}\smash{\begin{tabular}[t]{l}$\zeta$\end{tabular}}}}%
 \put(0.83725451,0.69381832){\color[rgb]{0,0,0}\makebox(0,0)[lt]{\lineheight{1.25}\smash{\begin{tabular}[t]{l}$\zeta$\end{tabular}}}}%
 \put(0.41617759,0.19708696){\color[rgb]{0,0,0}\makebox(0,0)[lt]{\lineheight{1.25}\smash{\begin{tabular}[t]{l}$\zeta$\end{tabular}}}}%
 \put(0.56748658,0.19708685){\color[rgb]{0,0,0}\makebox(0,0)[lt]{\lineheight{1.25}\smash{\begin{tabular}[t]{l}$\zeta$\end{tabular}}}}%
 \put(0.48486394,0.71571828){\color[rgb]{0,0,0}\makebox(0,0)[lt]{\lineheight{1.25}\smash{\begin{tabular}[t]{l}$-1$\end{tabular}}}}%
 \put(0.61228203,0.49472752){\color[rgb]{0,0,0}\makebox(0,0)[lt]{\lineheight{1.25}\smash{\begin{tabular}[t]{l}$-1$\end{tabular}}}}%
 \put(0.36839584,0.49074574){\color[rgb]{0,0,0}\makebox(0,0)[lt]{\lineheight{1.25}\smash{\begin{tabular}[t]{l}$-1$\end{tabular}}}}%
 \put(0.66504109,0.63210011){\color[rgb]{0,0,0}\makebox(0,0)[lt]{\lineheight{1.25}\smash{\begin{tabular}[t]{l}$-1$\end{tabular}}}}%
 \put(0.52070035,0.40115485){\color[rgb]{0,0,0}\makebox(0,0)[lt]{\lineheight{1.25}\smash{\begin{tabular}[t]{l}$-1$\end{tabular}}}}%
 \put(0.3504778,0.69680457){\color[rgb]{0,0,0}\makebox(0,0)[lt]{\lineheight{1.25}\smash{\begin{tabular}[t]{l}$-1$\end{tabular}}}}%
 \put(0.55554107,0.32550046){\color[rgb]{0,0,0}\makebox(0,0)[lt]{\lineheight{1.25}\smash{\begin{tabular}[t]{l}$\zeta^{-1}$\end{tabular}}}}%
 \put(0.41020487,0.3274913){\color[rgb]{0,0,0}\makebox(0,0)[lt]{\lineheight{1.25}\smash{\begin{tabular}[t]{l}$\zeta^{-1}$\end{tabular}}}}%
 \put(0.66504109,0.76748198){\color[rgb]{0,0,0}\makebox(0,0)[lt]{\lineheight{1.25}\smash{\begin{tabular}[t]{l}$\zeta^{-1}$\end{tabular}}}}%
 \put(0.74965457,0.62712291){\color[rgb]{0,0,0}\makebox(0,0)[lt]{\lineheight{1.25}\smash{\begin{tabular}[t]{l}$\zeta^{-1}$\end{tabular}}}}%
 \put(0.30568221,0.77445012){\color[rgb]{0,0,0}\makebox(0,0)[lt]{\lineheight{1.25}\smash{\begin{tabular}[t]{l}$\zeta^{-1}$\end{tabular}}}}%
 \put(0.23600044,0.62513208){\color[rgb]{0,0,0}\makebox(0,0)[lt]{\lineheight{1.25}\smash{\begin{tabular}[t]{l}$\zeta^{-1}$\end{tabular}}}}%
 \end{picture}%
\caption{Dynkin diagrams in the Weyl groupoid of Example \ref{wg_nichols_example}.}\label{fig_wgex}
\end{figure}

Figure~\ref{fig_wgex} displays the Weyl groupoid obtained from $\qq$: the Dynkin diagrams are the objects, reflections are indicated by separating lines.
\end{Example}

\subsection{Definition of a Weyl groupoid}

\subsubsection{Cartan graphs}

The collection of Cartan matrices produced in Example \ref{wg_nichols_example} is an example of what is known as a~Cartan graph.\footnote{In early papers these were called Cartan schemes.}
To define the corresponding Weyl groupoid, we need several notions (compare~\mbox{\cite{p-CH09b, p-CH09a}}).

\begin{Definition}[{\cite[Section~1.1]{b-Kac90}}]
Let $I$ be a non-empty finite set and $\{\alpha_i \mid i\in I\}$ the standard basis of $\ZZ^I$.
A {\it generalized Cartan matrix} $C=(c_{ij})_{i,j \in I}$ is a matrix in $\ZZ^{I \times I}$ such that
\begin{enumerate}\setlength{\leftskip}{0.10cm}\itemsep=0pt
\item[(M1)] $c_{ii}=2$ and $c_{jk}\le 0$ for all $i,j,k\in I$ with $j\not=k$,
\item[(M2)] if $i,j\in I$ and $c_{ij}=0$, then $c_{ji}=0$.
\end{enumerate}
\end{Definition}

\begin{Definition}
Let $A$ be a non-empty set, $\rho_i\colon A \to A$ a map for all $i\in I$, and $C^a=\big(c^a_{jk}\big)_{j,k \in I}$ a generalized Cartan matrix in $\ZZ ^{I \times I}$ for all $a\in A$. The quadruple
\[ \Cc = \Cc \big(I,A,(\rho_i)_{i \in I}, \big(C ^a\big)_{a \in A}\big) \]
is called a {\it Cartan graph} if
\begin{enumerate}\itemsep=0pt
\item[(C1)] $\rho_i^2 = \id$ for all $i \in I$,
\item[(C2)] $c^a_{ij} = c^{\rho_i(a)}_{ij}$ for all $a\in A$ and $i,j\in I$.
\end{enumerate}
\end{Definition}

\begin{Definition}
Let $\Cc = \Cc \big(I,A,(\rho_i)_{i \in I}, \big(C ^a\big)_{a \in A}\big)$ be a Cartan graph.
For all $i \in I$ and $a \in A$ define $\sigma _i^a \in \Aut\big(\ZZ ^I\big)$ by
\begin{align*}
\sigma _i^a \big(\alpha_j\big) = \alpha_j - c_{ij}^a \alpha_i \qquad
\text{for all $j \in I$.}
\end{align*}
The {\it Weyl groupoid of} $\Cc $ is the category $\Wg (\Cc )$ such that $\Obj(\Wg (\Cc ))=A$ and
the morphisms are compositions of maps $\sigma _i^a$ with $i\in I$ and $a\in A$,
where $\sigma _i^a$ is considered as an element in $\Hom (a,\rho_i(a))$.
The category $\Wg (\Cc )$ is a groupoid in the sense that all morphisms are isomorphisms.
The cardinality of $I$ is the {\it rank of} $\Wg (\Cc )$.
\end{Definition}

Since most upper indices referring to elements of $A$ are determined by the context, we will often omit them to improve readability.

\subsubsection{Root systems}

\begin{Definition}
Let $\Cc $ be a Cartan graph. For all $a\in A$ let
\[ \rer a=\big\{ \id ^a \sigma _{i_1}\cdots \sigma_{i_k}(\alpha_j)\,|\,
k\in \NN _0,\,i_1,\dots,i_k,j\in I\big\}\subseteq \ZZ ^I.\]
The elements of the set $\rer a$ are called {\it real roots} (at $a$).
The pair $\big(\Cc ,\big(\rer a\big)_{a\in A}\big)$ is denoted by $\rsC \re (\Cc )$.
A real root $\alpha\in \rer a$, where $a\in A$, is called positive
(resp.\ negative) if $\alpha\in \NN _0^I$ (resp.\ $\alpha\in -\NN _0^I$).
\end{Definition}

The Weyl groupoids which are relevant for the study of Nichols algebras produce sets of real roots which satisfies additional properties:

\begin{Definition}
Let $\Cc =\Cc \big(I,A,(\rho_i)_{i\in I},\big(C ^a\big)_{a\in A}\big)$ be a Cartan graph.
For all $a\in A$ let $R^a\subseteq \ZZ ^I$, and define
$m_{i,j}^a= \big|R^a \cap \big(\NN_0 \alpha_i + \NN_0 \alpha_j\big)\big|$ for all $i,j\in
I$ and $a\in A$. We say that
\[ \rsC = \rsC \big(\Cc , \big(R^a\big)_{a\in A}\big) \]
is a {\it root system of type} $\Cc $, if it satisfies the following axioms.
\begin{enumerate}\itemsep=0pt
\item[(R1)]
$R^a=R^a_+\cup - R^a_+$, where $R^a_+=R^a\cap \NN_0^I$, for all $a\in A$.
\item[(R2)]
$R^a\cap \ZZ\alpha_i=\{\alpha_i,-\alpha_i\}$ for all $i\in I$, $a\in A$.
\item[(R3)]
$\sigma _i^a\big(R^a\big) = R^{\rho_i(a)}$ for all $i\in I$, $a\in A$.
\item[(R4)]
If $i,j\in I$ and $a\in A$ such that $i\not=j$ and $m_{i,j}^a$ is finite, then
$(\rho_i\rho_j)^{m_{i,j}^a}(a)=a$.
\end{enumerate}
\end{Definition}

The axioms (R2) and (R3) are always fulfilled for $\rsC \re $.
The root system $\rsC $ is called {\it finite} if for all $a\in A$ the set $R^a$ is finite.
By \cite[Proposition~2.12]{p-CH09a}, if $\rsC $ is a finite root system
of type $\Cc $, then $\rsC =\rsC \re $, and hence $\rsC \re $ is a root
system of type $\Cc $ in that case.

\section{Tensor powers of reflections}\label{sec: tensor powers}

\subsection{Additive notation}\label{add_not}
Let us take a closer look to Rosso's formula \eqref{rosso_cartan}. For $i\ne j$, we are looking for a smallest $m\in\NN_0$ such that
\begin{equation*}
1+q_{ii}+q_{ii}^2+\cdots+q_{ii}^m=0 \qquad\text{or}\qquad q_{ii}^m q_{ij}q_{ji}=1,
\end{equation*}
or equivalently,
\begin{equation}\label{rosso_classic2}
\big(q_{ii}^{m+1}=1\quad\text{and}\quad q_{ii}\ne 1\big) \qquad\text{or}\qquad q_{ii}^m q_{ij}q_{ji}=1.
\end{equation}
This shows that in the whole procedure computing the Weyl groupoid, only integral powers of~$q_{ij}$ are involved (remember \eqref{reflect_q}).
This allows us to write everything in an additive notation:
Let $W:=\ZZ^{n\times n} = \langle e_{ij}\mid i,j\in \{1,\dots,n\}\rangle$ and $\bich_\qq$ be the group homomorphism
\[ \bich_\qq \colon \ (W,+) \rightarrow U:=\langle q_{ij} \mid i,j\in \{1,\dots,n\}\rangle\le \KK^\times,
\qquad e_{ij}\mapsto q_{ij}. \]
Then we obtain an isomorphism of $\ZZ$-modules
\[ \varepsilon \colon \ U \rightarrow M_\qq:=W/\ker(\bich_\qq). \]
Using the map $\varepsilon$, Rosso's condition (\ref{rosso_classic2}) becomes
\begin{gather*}
\big((m+1)\varepsilon(q_{ii})=0\quad\text{and}\quad \varepsilon(q_{ii})\ne 0\big) \qquad\text{or}\qquad m\varepsilon(q_{ii})+\varepsilon(q_{ij})+\varepsilon(q_{ji})=0
\end{gather*}
or with $\overline{e}_{ij}:=e_{ij}+\ker(\bich_\qq)$ for $i,j=1,\dots,n$ in $M_\qq$,
\begin{gather*}
\big((m+1)\overline{e}_{ii}=0\quad\text{and}\quad \overline{e}_{ii}\ne 0\big) \qquad\text{or}\qquad m\overline{e}_{ii}+\overline{e}_{ij}+\overline{e}_{ji}=0.
\end{gather*}

\subsection{Tensor products and eigenvectors}\label{add_not_tensor}
The reflection of matrices defined in (\ref{reflect_q}),
\[ \sigma_\ell(\qq)_{i,j} := \prod_{k,s=1}^n q_{k,s}^{\sigma_\ell(\alpha_i)_k \sigma_\ell(\alpha_j)_s} \]
translates to a tensor product in additive notation:
\[
\begin{tikzcd}
\ZZ^n \arrow[r, "\otimes"] & W\cong \ZZ^n\otimes\ZZ^n \arrow[dr, "\bich_{\sigma_\ell(\qq)}"]
& \\
 & & U \le \KK^\times ,\\
 \ZZ^n \arrow[r, "\otimes"] \arrow[uu, "\sigma_\ell"]
 & W\cong \ZZ^n\otimes\ZZ^n \arrow[ur, "\bich_{\qq}", swap] \arrow[uu, "\sigma_\ell \otimes \sigma_\ell"] &
\end{tikzcd}
\]
where $\ZZ^n \stackrel{\otimes}{\longrightarrow} W$ is the map $v \mapsto v\otimes v$.

To obtain a Weyl groupoid, it is necessary that Rosso's formula satisfies the symmetry
\begin{gather*}
c^\qq_{\ell,j} = c^{\sigma_\ell(\qq)}_{\ell,j}
\end{gather*}
for all $\ell,j\in \{1,\dots,r\}$. This comes from the fact that
$(m+1) e_{\ell \ell}$ and $m e_{\ell \ell}+e_{\ell j}+e_{j \ell}$ are eigenvectors of $\sigma_\ell \otimes \sigma_\ell$ to the eigenvalues $\pm 1$ for $m = -c^\qq_{\ell,j}$.
Because of this fact, the conditions will remain the same after reflecting.
In order to ``explain'' this fact, we now compare the degrees of $(\ad x_\ell)^{m+1}(x_j)$ and $(\ad x_\ell)^{m}(x_j)$:
\[ u_m := ((m+1)\alpha_\ell+\alpha_j)^{\otimes 2} - (m\alpha_\ell+\alpha_j)^{\otimes 2} = (2m+1)\alpha_\ell\otimes\alpha_\ell + \alpha_\ell\otimes \alpha_j + \alpha_j\otimes \alpha_\ell. \]
The vector $u_m$ is a sum of eigenvectors of $\sigma_\ell\otimes\sigma_\ell$:
\[ u_m = \underbrace{\frac{1}{2}\big(\sigma_\ell^{\otimes 2}(u_m)+u_m\big)}_{v_m}
\underbrace{-\frac{1}{2}\big(\sigma_\ell^{\otimes 2}(u_m)-u_m\big)}_{w_m}
, \]
these are
\begin{gather*}
v_m := \frac{1}{2}\big(\sigma_\ell^{\otimes 2}(u_m)+u_m\big)
 = (m+1)\alpha_\ell^{\otimes 2}
, \\
w_m := -\frac{1}{2}\big(\sigma_\ell^{\otimes 2}(u_m)-u_m\big)
 = m\alpha_\ell^{\otimes 2}+\alpha_\ell\otimes \alpha_j + \alpha_j\otimes \alpha_\ell.
\end{gather*}
We observe that Rosso's condition \eqref{Rrosso} becomes
\begin{equation}\label{R_add}
(1-q_{\ell,\ell})R_{m} = (1-\bich_\qq(w_m))(1-\bich_\qq(v_m)).
\end{equation}

\subsection{Higher tensor powers}\label{generalized_Rosso}
We now generalize the additive decomposition into eigenvectors \eqref{R_add} to higher tensor powers.
Let $\dn\in\NN$ and
\[\qq = (q_{i_1,\dots,i_\dn})_{i_1,\dots,i_\dn}\in \KK^{\{1,\dots,n\}^\dn}.
\]
Let $W:=\big(\ZZ^n\big)^{\otimes \dn}$ and denote by $e_{i_1,\dots,i_\dn}:=\alpha_{i_1}\otimes\cdots\otimes\alpha_{i_\dn}$ the elements of its standard basis.
Let~$\bich_\qq$ be the group homomorphism
\[ \bich_\qq \colon \ (W,+) \rightarrow U:=\big\langle q_{i_1,\dots,i_\dn} \mid i_1,\dots,i_\dn\in \{1,\dots,n\}\big\rangle\le \KK^\times,
\qquad e_{i_1,\dots,i_\dn}\mapsto q_{i_1,\dots,i_\dn}. \]
As before, we compare the $\dn$-th tensor powers of presumed degrees:
\[ u_m := ((m+1)\alpha_\ell+\alpha_j)^{\otimes \dn} - (m\alpha_\ell+\alpha_j)^{\otimes \dn}. \]
We decompose $u_m$ as a sum of eigenvectors of $\sigma_\ell^{\otimes \dn}$:
\[ u_m = \underbrace{\frac{1}{2}\big(\sigma_\ell^{\otimes \dn}(u_m)+u_m\big)}_{v_m}
\underbrace{-\frac{1}{2}\big(\sigma_\ell^{\otimes \dn}(u_m)-u_m\big)}_{w_m}. \]
We do not know yet what will be the analog for $d>2$ of the factor $(1-q_{\ell,\ell})$, so let us first set
\begin{equation*}
\qR_m := \big(1-\bich_\qq(v_m)\big)\big(1-\bich_\qq(w_m)\big).
\end{equation*}
In the following Theorem \ref{rosso_rec_d}, we find a recursion for $\qR_m$ similar to equation \eqref{rosso_rec}. We need some notation for its formulation:

\begin{Definition}\label{def: defn of qk's}
Let $\ell,j\in\{1,\dots,n\}$.
For $k\in\{0,\dots,\dn\}$ and $m\in\NN_0$ we set
\begin{gather*}
\gamma_k := \sum_{\substack{i_1,\dots,i_\dn\in\{\ell,j\}\\ |\{\nu\:\mid\: i_\nu=j\}|=k}} \alpha_{i_1}\otimes \dots \otimes \alpha_{i_\dn},
\qquad
 q_k :=\chi_\qq(\gamma_k)= \prod_{\substack{i_1,\dots,i_\dn\in\{\ell,j\}\\ |\{\nu\:\mid\: i_\nu=j\}|=k}} q_{i_1,\dots,i_\dn},\\
 e_{m,k}:=\sum_{\substack{\nu=0 \\ (-1)^\nu=(-1)^k}}^{k-2} \binom{k}{\nu} m^\nu,
\qquad
r_m:=\prod_{i=0}^{\dn-2} q_i^{e_{m,\dn-i}},\\
f_{m,k} := \sum_{\substack{\nu=0 \\ (-1)^\nu=(-1)^{k+1}}}^{k-1} \binom{k}{\nu} m^\nu,
\qquad
z_m := \prod_{i=0}^{\dn-1} q_i^{f_{m,\dn-i}}.
\end{gather*}
\end{Definition}

\begin{Theorem}\label{rosso_rec_d}
With the above notation, $\qR_m$, $m\in\NN$ satisfy the following recursions:
\begin{gather}
\label{rec0} \qR_0 = (1-r_0)(1-z_0),\\
\label{rec1} \qR_m = r_m \qR_{m-1} + (1-r_m)(1-z_m).
\end{gather}
\end{Theorem}

\begin{proof}
For equation \eqref{rec0}, we compute
\begin{align*}
v_0 &= \frac{1}{2} \big(\sigma_\ell^{\otimes \dn}(u_0)+u_0\big)
= \frac{1}{2}\big((-\alpha_\ell+\alpha_j)^{\otimes \dn} - \alpha_j^{\otimes \dn}
 + (\alpha_\ell+\alpha_j)^{\otimes \dn} - \alpha_j^{\otimes \dn} \big) \\
&= \frac{1}{2}\sum_{\nu=0}^{d-1}\big(1+(-1)^{\dn-\nu}\big)\gamma_\nu
= \sum_{\nu=0}^{d-2} e_{0,\dn-\nu}\gamma_\nu
\end{align*}
because $e_{0,k}=\frac{1}{2}\big(1+(-1)^k\big)$; hence $\chi_\qq(v_0)=r_0$.
We check in the same way that $\chi_\qq(w_0)=z_0$, thus $\qR_0=(1-r_0)(1-z_0)$.
For the recursions, observe first that using
\[e_{m,k}+f_{m,k} = (m+1)^k-m^k\] we obtain
\begin{equation}\label{efef}
e_{m,k}-f_{m,k} = (m-1)^k-m^k = -e_{m-1,k}-f_{m-1,k}.
\end{equation}
This implies the relations
\begin{gather}
\label{ef1} \prod_{\nu=0}^{d-1} q_\nu^{f_{m,\dn-\nu}} \stackrel{\eqref{efef}}{=}
\prod_{\nu=0}^{d-2} q_\nu^{e_{m,\dn-\nu}}
\prod_{\nu=0}^{\dn-1} q_\nu^{e_{m-1,\dn-\nu}+f_{m-1,\dn-\nu}}, \\
\label{ef2} \prod_{\nu=0}^{d-2} q_\nu^{e_{m,\dn-\nu}}
\prod_{\nu=0}^{\dn-1} q_\nu^{\frac{1}{2}(e_{m-1,\dn-\nu}+f_{m-1,\dn-\nu})}
\stackrel{\eqref{efef}}{=} \prod_{\nu=0}^{\dn-1} q_\nu^{\frac{1}{2}(e_{m,\dn-\nu}+f_{m,\dn-\nu})}.
\end{gather}
We now compute $v_m$ and $w_m$:
\begin{align*}
v_m &= \frac{1}{2} \big(\sigma_\ell^{\otimes \dn}(u_m)+u_m\big) \\
&= \frac{1}{2}\big(\big(-\alpha_\ell+\alpha_j\big)^{\otimes \dn} - \alpha_j^{\otimes \dn}
 +\big((m+1)\alpha_\ell+\alpha_j\big)^{\otimes \dn} - \big(m\alpha_\ell+\alpha_j\big)^{\otimes \dn} \big) \\
&= \frac{1}{2}\sum_{\nu=0}^{d-1}\big(e_{m,\dn-\nu}+f_{m,\dn-\nu}+(-1)^{\dn-\nu}\big)\gamma_\nu
\end{align*}
and similarly
\[ w_m = \frac{1}{2}\sum_{\nu=0}^{d-1}\big(e_{m,\dn-\nu}+f_{m,\dn-\nu}+(-1)^{\dn-\nu+1}\big)\gamma_\nu. \]
Thus we get
\begin{gather*}
\qR_m = (1-\bich_\qq(w_m))(1-\bich_\qq(v_m)) \\
\hphantom{\qR_m }{}= 1
-\prod_{\nu=0}^{\dn-1} q_\nu^{\frac{1}{2}\big(e_{m,\dn-\nu}+f_{m,\dn-\nu}+(-1)^{\dn-\nu}\big)}
-\prod_{\nu=0}^{\dn-1} q_\nu^{\frac{1}{2}\big(e_{m,\dn-\nu}+f_{m,\dn-\nu}+(-1)^{\dn-\nu+1}\big)} \\
\hphantom{\qR_m=}{}+\prod_{\nu=0}^{\dn-1} q_\nu^{e_{m,\dn-\nu}+f_{m,\dn-\nu}} \\
\stackrel{\text{\eqref{ef1}, \eqref{ef2}}}{=}
 \left( \prod_{\nu=0}^{d-2} q_\nu^{e_{m,\dn-\nu}} \right)
\left( 1-\prod_{\nu=0}^{\dn-1} q_\nu^{\frac{1}{2}\big(e_{m-1,\dn-\nu}+f_{m-1,\dn-\nu}+(-1)^{\dn-\nu}\big)} \right)
\\
\hphantom{\qR_m=}{}\times\left( 1-\prod_{\nu=0}^{\dn-1} q_\nu^{\frac{1}{2}\big(e_{m-1,\dn-\nu}+f_{m-1,\dn-\nu}+(-1)^{\dn-\nu+1}\big)} \right)
 \\
\hphantom{\qR_m=}{}+ \left(1-\prod_{\nu=0}^{d-2} q_\nu^{e_{m,\dn-\nu}} \right) \left(1-\prod_{\nu=0}^{d-1} q_\nu^{f_{m,\dn-\nu}} \right) \\
\hphantom{\qR_m}{}=r_m \qR_{m-1}+(1-r_m)(1-z_m).\tag*{\qed}
\end{gather*}\renewcommand{\qed}{}
\end{proof}

\begin{Example}\quad
\begin{enumerate}\itemsep=0pt
\item If $\dn=2$, then we recover the recursion \eqref{rosso_rec} and Rosso's condition from Nichols algebras (notice that then $r_m=q_0=q_{\ell,\ell}$ for all $m$).
\item If $\dn=3$, then
\begin{gather*}
\chi_\qq(v_m) = q_{\ell\ell\ell}^{3\frac{m(m+1)}{2}}(q_{\ell\ell j}q_{\ell j\ell}q_{j \ell\ell})^{m+1}
= q_{0}^{3\frac{m(m+1)}{2}}q_{1}^{m+1}, \\
\chi_\qq(w_m) = q_{\ell\ell\ell}^{3\frac{m(m+1)}{2}+1}(q_{\ell\ell j}q_{\ell j\ell}q_{j \ell\ell})^{m}
q_{\ell j j}q_{j \ell j}q_{j j \ell}
= q_{0}^{3\frac{m(m+1)}{2}+1}q_{1}^{m} q_{2}.
\end{gather*}
\item $d=4$: With respect to the basis $\gamma_0,\dots,\gamma_4$,
\begin{gather*}
v_m = \left( 2m^3+3m^2+2m+1,
\frac{3}{2}m^2+\frac{3}{2}m,
m+1,
0,
0\right), \\
w_m = \left(2 m^3+3 m^2+2m,
\frac{3}{2}m^2+\frac{3}{2}m+1,
m,
1,
0
\right).
\end{gather*}
\end{enumerate}
\end{Example}

\subsection{Higher Rosso conditions}
Remember that the original formula by Rosso \eqref{rosso_cartan} in multiplicative notation includes a geometric series which we replaced by the condition that $q_{\ell,\ell}^{m+1}=1$ and $q_{\ell,\ell}\ne 1$. In other words, when $d=2$, we have to divide $\qR_m$ by $1-q_{\ell,\ell}$. Moreover, this division is necessary to ensure Axiom (M2) (see Proposition~\ref{RdCartan}) if we aim to get a Weyl groupoid.

So we need to find some polynomial $\tilde g_m$ in $q_0,\dots,q_d$ such that $\qR_m/\tilde g_m$ is a condition that produces a Weyl groupoid. In particular, to obtain a Cartan matrix and a Cartan graph (which will be done in Definition~\ref{cartan_d} and Proposition~\ref{RdGraph}),
$\qR_m/\tilde g_m$ should not be rational functions with non trivial denominators.

In the classical case of Nichols algebras, $\tilde g_m$ is a divisor of $(1-\chi_\qq(v_m))$. For $d>2$, the polynomial $(1-\chi_\qq(v_m))$ in $q_0,\dots,q_d$ does not factorize in a nice way. However, we notice that the greatest common divisor of the coordinates of $v_m$ (as polynomials in $m$) is $m+1$. Indeed,
\begin{align*}
v_m &= \frac{1}{2}\sum_{\nu=1}^d\big((m+1)^\nu-m^\nu+(-1)^\nu\big)\gamma_{d-\nu} \\
&=\frac{1}{2}\sum_{\nu=1}^d (m+1) \left( (m+1)^{\nu-1}+\sum_{k=0}^{\nu-1} (-1)^{\nu-k} m^k \right)\gamma_{d-\nu}.
\end{align*}
This suggests the following definition.

\begin{Definition}
With the above notation and $m\in\NN$ we set
\begin{gather}
s_m :=  \sum_{\nu=1}^d \frac{1}{2}\left( (m+1)^{\nu-1}+\sum_{k=0}^{\nu-1} (-1)^{\nu-k} m^k \right) \gamma_{\dn-\nu}, \nonumber\\
g_m :=  \chi_\qq(s_m) = \prod_{\nu=1}^d q_{\dn-\nu}^{\frac{1}{2}\left( (m+1)^{\nu-1}+\sum_{k=0}^{\nu-1} (-1)^{\nu-k} m^k \right)}, \nonumber\\
\label{Rossod} R_m :=  \frac{\qR_m}{1-g_m}
= \frac{(1-\bich_\qq(v_m))(1-\bich_\qq(w_m))}{1-\bich_\qq(s_m)}.
\end{gather}
\end{Definition}

Unfortunately, some of the coordinates of $s_m$ are in $\frac{1}{2}+\ZZ$ (the exponents of $q_i$ for $m$ odd, $(i+d)$ odd, and $i<\dn-1$) and we possibly get square roots of $q_0,\dots,q_\dn$ in $g_m$. In order to obtain a consistent condition and definition, we may fix square roots $\sqrt{q_0},\dots,\sqrt{q_\dn}$ first and define the $q_i$ to be their squares. This corresponds to enlarging the $\ZZ$-module considered in additive notation. We will see below that in most cases, the choice of square roots in fact does not affect the condition $R_m=0$.

\begin{Corollary} The conditions $R_m$, $m\in\NN_0$ satisfy:
\begin{gather*}
R_m \in   \ZZ\big[\sqrt{q_0},\dots,\sqrt{q_d}\big],\nonumber\\
R_0 =  1-z_0, \nonumber\\
R_m =  \frac{r_m(1-g_{m-1})}{1-g_m} R_{m-1} + \frac{(1-r_m)(1-z_m)}{1-g_m}.
\end{gather*}
\end{Corollary}

\begin{proof}
Since $\chi_\qq(v_m)=g_m^{m+1}$, the polynomial $1-\chi_\qq(v_m)$ is divisible by $1-g_m$, the quotient being a geometric series. However, these are integral polynomials in $\sqrt{q_i}$, $i=0,\dots,d$ by the definition of $g_m$.
One can check that
\[g_0=\prod_{\nu=1}^d q_{\dn-\nu}^{\frac{1}{2}(1+(-1)^\nu)}=r_0,
\]
thus
$R_0 = (1-r_0)(1-z_0)/(1-g_0)=1-z_0$.
The recursion is a direct consequence of Theo\-rem~\ref{rosso_rec_d}.
\end{proof}

\begin{Remark}The condition $R_m=0$ may be written as
\begin{gather*}
\big(\bich_\qq(v_m)=1 \quad \text{and}\quad \bich_\qq(s_m)\ne 1\big)
\qquad \text{or} \qquad
\bich_\qq(w_m)=1,
\end{gather*}
which is equivalent to
\begin{gather*}
\big(\bich_\qq(s_m)^{m+1}=1 \quad \text{and}\quad \bich_\qq(s_m)\ne 1\big)
\qquad \text{or} \qquad
\bich_\qq(w_m)=1.
\end{gather*}
Note that square roots of $q_i$ only appear in $\bich_\qq(s_m)$ for odd $m$ since $v_m\in\ZZ[q_0,\dots,q_\dn]$ and $s_m=v_m/(m+1)$.

Now assume that $\bich_\qq(s_m)^2=1$. Then the first condition cannot be satisfied for even $m$.
If~$m$~is odd,
then $\bich_\qq(s_m)^{m+1}=1^{(m+1)/2}=1$ and in this case the first condition is satisfied if and only if $\bich_\qq(s_m)=-1$. Hence $R_m=0$ is equivalent to
\begin{gather*}
\big(\bich_\qq(v_m)=1 \quad\text{and} \quad\bich_\qq(2s_m)\ne 1\big)
\qquad \text{or} \qquad
(\bich_\qq(s_m)=-1 \quad \text{and}\quad m \text{ odd})\nonumber \\
\text{or} \qquad
\bich_\qq(w_m)=1.
\end{gather*}

It may happen that $\bich_\qq(s_m)=-1$ with $m$ odd never occurs in the construction of the Weyl groupoid of a specific tensor. In this case, the general Rosso condition would be
\begin{gather*}
\big(\bich_\qq(v_m)=1 \quad \text{and}\quad \bich_\qq(2s_m)\ne 1\big)
\qquad \text{or} \qquad
\bich_\qq(w_m)=1,
\end{gather*}
and it would avoid the square roots.
However, in the classical case ($d=2$) one often has $q_{\ell,\ell}=-1$, and we expect that this
condition would exclude many cases where $s_m\in\ZZ^d$ and $\bich_\qq(s_m)=-1$.
\end{Remark}

\subsection{Weyl groupoids from higher tensor powers}
We may now use the Rosso condition $R_m$ for $\dn\ge 2$ to compute Weyl groupoids for a given $\qq$ in the same way as for $\dn=2$.
For $\ell,j\in \{1,\dots,n\}$ we write $R^{\ell,j}_m:=R_m$.

\begin{Definition}\label{cartan_d}
Let $\dn\in\NN$ and
$\qq = (q_{i_1,\dots,i_\dn})_{i_1,\dots,i_\dn}\in \KK^{\{1,\dots,n\}^\dn}$.
For each tuple $(i_1,\dots,i_\dn)$, we fix a square root
$\sqrt{q_{i_1,\dots,i_\dn}}\in\KK$ of $q_{i_1,\dots,i_\dn}$ and
by abuse of notation we write
\[\sqrt{\qq}:=\big(\sqrt{q_{i_1,\dots,i_\dn}}\big)_{i_1,\dots,i_\dn}.\]
Then using equation \eqref{Rossod} we define
\begin{equation*}
c^\qq_{\ell,j} = \begin{cases} -\min\big\{ m\in\NN_0 \mid R^{\ell,j}_m = 0 \big\}, & \ell\ne j, \\
2, & \ell = j, \end{cases}
\end{equation*}
and obtain a matrix $c^\qq=\big(c^\qq_{\ell,j}\big)_{\ell,j}$ if such an $m$ exists for all $\ell\ne j$.
In this case, we call $c^\qq$ the {\it Cartan matrix} of $\qq$.
It defines maps $\sigma_\ell$ via
\[ \sigma_\ell(\qq)_{i_1,\dots,i_\dn} :=
\prod_{k_1,\dots,k_\dn=1}^n q_{k_1,\dots,k_\dn}^{\sigma_\ell(\alpha_{i_1})_{k_1}\cdots \sigma_\ell(\alpha_{i_\dn})_{k_\dn}}. \]
These maps act on the square roots in a compatible way since additively, $\qq$ is just twice the vector $\sqrt{\qq}$ and $\sigma_\ell^{\otimes \dn}$ is linear.
\end{Definition}

\begin{Proposition}\label{RdCartan}
Let $\dn\in \NN$ be even and
$\qq = (q_{i_1,\dots,i_\dn})_{i_1,\dots,i_\dn}\in \KK^{\{1,\dots,n\}^\dn}$.
If a Cartan matrix~$c^\qq$ is defined, then it is a generalized Cartan matrix.
\end{Proposition}
\begin{proof}
Axiom (M1) is satisfied by definition.
For (M2), remember that $R_0 = 1-z_0$. Since
\[z_0 = \sum_{\nu=0}^{d-1} \frac{1}{2}\big(1-(-1)^\nu\big)\gamma_\nu
= \sum_{\nu=0}^{d-1} \frac{1}{2}\big(1-(-1)^{d-\nu}\big)\gamma_\nu, \]
the condition $R_0$ is the same for $c^\qq_{\ell,j}$ as for $c^\qq_{j,\ell}$.
\end{proof}

\begin{Proposition}\label{RdGraph}
Let $\dn\in \NN$ be even, $I=\{1,\dots,n\}$, and
$A$ be a set of tensors $\qq\in \KK^{I^\dn}$ with defined Cartan matrix $c^\qq$.
Assume that for all $\qq\in A$ and $\ell\in I$ we have
\[ \rho_\ell(\qq):=\sigma^{\qq}_\ell (\qq) \in A. \]
Then $(I,A,(\rho_i)_{i\in I},(c^\qq)_{\qq\in A})$ is a Cartan graph.
\end{Proposition}
\begin{proof}
Let $\qq\in A$ and $\ell,j \in I$.
The matrices $c^\qq$ are generalized Cartan matrices by Proposition~\ref{RdCartan}.
If Axiom (C2) is satisfied, then (C1) is satisfied as well because $\rho_\ell$ is defined using a reflection which has order two and $\sigma^{\qq}_\ell = \sigma^{\rho_\ell(\qq)}_\ell$ since the relevant Cartan entries are equal by~(C2).

For (C2), we need to compare $c^\qq_{\ell,j}$ and $c^{\rho_\ell(\qq)}_{\ell,j}$ for $\ell\ne j$.
With $m:=-c^\qq_{\ell,j}$, we have $R_m=0$ and $R_k\ne 0$ for $k<m$.
Now $R_m = (1-\bich_\qq(v_m))(1-\bich_\qq(w_m))/(1-\chi_\qq(s_m))$.
Recall that $v_m$ and $w_m$ are eigenvectors, i.e., $\big(\sigma^{\qq}_\ell\big)^{\otimes \dn}(v_m) = v_m$,
$\big(\sigma^{\qq}_\ell\big)^{\otimes \dn}(w_m) = -w_m$;
also, $\big(\sigma^{\qq}_\ell\big)^{\otimes \dn}(s_m) = s_m$ because $(m+1)s_m=v_m$.
Hence replacing $\qq$ by $\sigma^{\qq}_\ell (\qq)$ will map $R_m$ to
$(1-\bich_\qq(v_m))(1-\bich_\qq(-w_m))/(1-\chi_\qq(s_m))$.
But $1-\bich_\qq(-w_m)=0$ if and only if $-w_m=0$ or equivalently $w_m=0$.
Therefore, via~\eqref{cartan_d} the new condition produces the same Cartan entry
$c^\qq_{\ell,j}=c^{\rho_\ell(\qq)}_{\ell,j}$ for $\ell\ne j$.
\end{proof}

We collect our results in the following theorem.

\begin{Theorem}\label{wg_from_q}
Let $\dn\in \NN$ be even, $I=\{1,\dots,n\}$, $\qq_0\in \KK^{I^\dn}$ a tensor, and fix square roots~$\sqrt{\qq_0}$.
Let $A$ be the smallest set with $\qq_0\in A$ and $\rho_\ell(\qq):=\sigma^{\qq}_\ell (\qq)\in A$ for all $\qq\in A$, $\ell\in I$,
assuming that $c^\qq$ is defined for all $\qq\in A$.
Then
$\Cc:=\big(I,A,(\rho_i)_{i\in I},\big(c^\qq\big)_{\qq\in A}\big)$ is a Cartan graph.
\\
We call $\Wg(\Cc)$ the {\it Weyl groupoid} of $\qq_0$.
\end{Theorem}

\begin{proof}
This is a special case of Proposition~\ref{RdGraph}.
\end{proof}

\subsection{Examples in rank two}

Finite Weyl groupoids of rank two are in bijection with
the set of triangulations of convex polygons by non-intersecting diagonals:

Let $\qq$ be a tensor of rank two ($n=2$, $I=\{1,2\}$) and consider the sequence of Cartan entries
\[c := \bigl(-c^\qq_{1,2},-c^{\sigma^\qq_1(\qq)}_{2,1},-c^{\sigma^{\sigma_1(\qq)}_2(\sigma^\qq_1(\qq))}_{1,2},\dots\bigr). \]
If the Weyl groupoid is finite, then this sequence $c$ may be interpreted as the numbers of triangles at the vertices of a triangulation of a convex polygon by non-intersecting diagonals. It is called a {\it quiddity cycle}.

Writing the coordinates of the sets $R^a$ of a root system into a matrix yields a so called {\it frieze pattern},\footnote{We omit a definition since we do not need it here.} see \cite{p-C14}. Figure~\ref{frieze} shows the example of the sequence $c=(1,4,1,2,2,2)$.
\begin{figure}[h]
 \centering
 \begin{minipage}[b]{0.44\textwidth}
 $ \begin{array}{cccccccccccc}
 & & \ddots & & & & & & & & & \\
 0 & 1 & 1 & 3 & 2 & 1 & 0 & & & & & \\
& 0 & 1 & 4 & 3 & 2 & 1 & 0 & & & & \\
 & & 0 & 1 & 1 & 1 & 1 & 1 & 0 & & & \\
 & & & 0 & 1 & 2 & 3 & 4 & 1 & 0 & & \\
 & & & & 0 & 1 & 2 & 3 & 1 & 1 & 0 & \\
 & & & & & 0 & 1 & 2 & 1 & 2 & 1 & 0 \\
 & & & & & & & & & \ddots & &
\end{array}
$
 \end{minipage}
 \qquad \qquad
 \begin{minipage}[b]{0.32\textwidth}
 \begin{tikzpicture}[auto,baseline=(s.center)]
 \node[name=s, draw, shape=regular polygon, regular polygon sides=6, minimum size=2.8cm] {};
 \draw (s.corner 1) to (s.corner 3);
 \draw (s.corner 6) to (s.corner 3);
 \draw (s.corner 5) to (s.corner 3);
 \draw[shift=(s.corner 1)] node[above] {{\small 2}};
 \draw[shift=(s.corner 2)] node[above] {{\small 1}};
 \draw[shift=(s.corner 3)] node[left] {{\small 4}};
 \draw[shift=(s.corner 4)] node[below] {{\small 1}};
 \draw[shift=(s.corner 5)] node[below] {{\small 2}};
 \draw[shift=(s.corner 6)] node[right] {{\small 2}};
 \end{tikzpicture}
 \end{minipage}
\caption{A frieze pattern and the corresponding triangulation.\label{frieze}}
\end{figure}
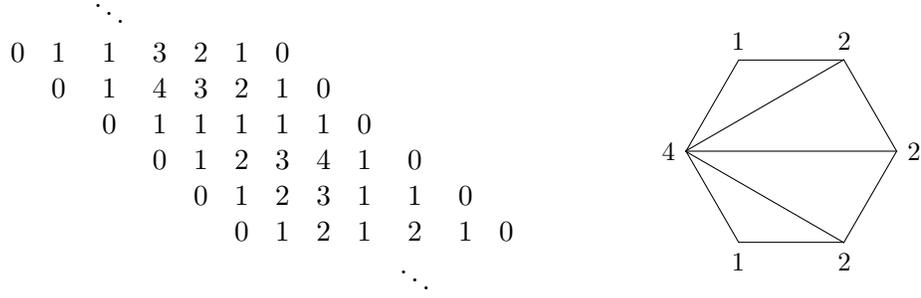

We have many examples of tensors producing interesting Weyl groupoids. Since we do not have a classification yet, we content ourselves with two examples.

\begin{Example}
Let $d=4$, $\zeta$ primitive $11$-th root of unity, $\mu:=-\zeta$, and
\[ \qq=(q_0,\dots,q_4)=\big(\zeta^2,\zeta^2,\zeta^2,\zeta^2,\zeta^2\big), \qquad \sqrt{\qq}=\big(\sqrt{q_0},\dots,\sqrt{q_4}\big) = (\mu,\mu,\mu,\mu,\mu). \]
We compute the values
\[\renewcommand{\arraystretch}{1.2}
\begin{array}{l|rrrrrr}
m & 0 & 1 & 2 & 3 \\
\hline
\chi_\qq(v_m) & \mu^4 & \mu^4 & \mu^2 & 1 \\
\chi_\qq(w_m) & \mu^4 & \mu^4 & \mu^2 & 1 \\
\chi_\qq(s_m) & \mu^4 & \mu^{13} & \mu^8 & \mu^{11}
\end{array}
\]
so the smallest $m$ with $R_m^{1,2}=0$ is $3$ and $c^\qq_{1,2}=-3$.
Reflecting produces the new ``tensor''
\[\sigma_1^{\qq}\big(\sqrt{\qq}\big) = \big(\mu,\mu^{9},\mu^{20},\mu^{12},\mu^{11}\big).\]
To compute the next Cartan entry we need to reverse this sequence because we now want to reflect at label $2$, the new $q_i$ are
$\big(\sqrt{q_0},\dots,\sqrt{q_4}\big)=\big(\mu^{11},\mu^{12},\mu^{20},\mu^{9},\mu\big)$:
\[\renewcommand{\arraystretch}{1.2}
\begin{array}{l|rrrrrr}
m & 0 & 1 \\
\hline
\chi_{\sigma_1^{\qq}}(v_m) & \mu^{18} & \mu^{20} \\
\chi_{\sigma_1^{\qq}}(w_m) & \mu^{20} & 1 \\
\chi_{\sigma_1^{\qq}}(s_m) & \mu^{18} & \mu^{10}
\end{array}
\]
The next Cartan entry is $-1$. Continuing this procedure, we obtain the complete sequence of~Cartan entries.
The sequence of negative Cartan entries is the quiddity cycle of the corresponding Weyl groupoid:
\[ (3, 1, 2, 3, 2, 1, 3). \]
As a triangulation of a $7$-gon, this is,
\begin{center}
\includegraphics[width=0.16\textwidth]{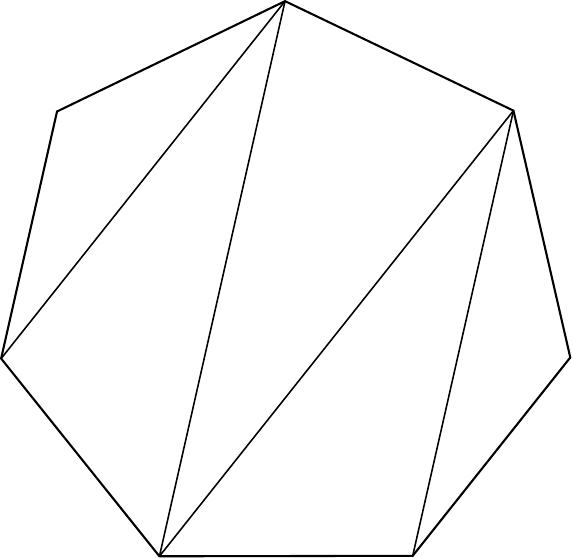}
\end{center}
This is not a Weyl groupoid of a Nichols algebra of diagonal type.
\end{Example}

\begin{Example}
Let $d=4$, $\zeta$ primitive $7$-th root of unity, $\mu:=-\zeta$, and
\[ \qq=(q_0,\dots,q_4)=\big(\zeta,\zeta^2,\zeta,\zeta^2,\zeta^2\big), \qquad \big(\sqrt{q_0},\dots,\sqrt{q_4}\big) = \big(\mu^4,\mu,\mu^4,\mu,\mu\big). \]
Then the (negative) Cartan entries ({\it quiddity cycle}) of the corresponding Weyl groupoid are
\[ (2, 1, 5, 1, 3, 1, 5, 1, 2, 3). \]
The triangulation is
\begin{center}
\includegraphics[width=0.2\textwidth]{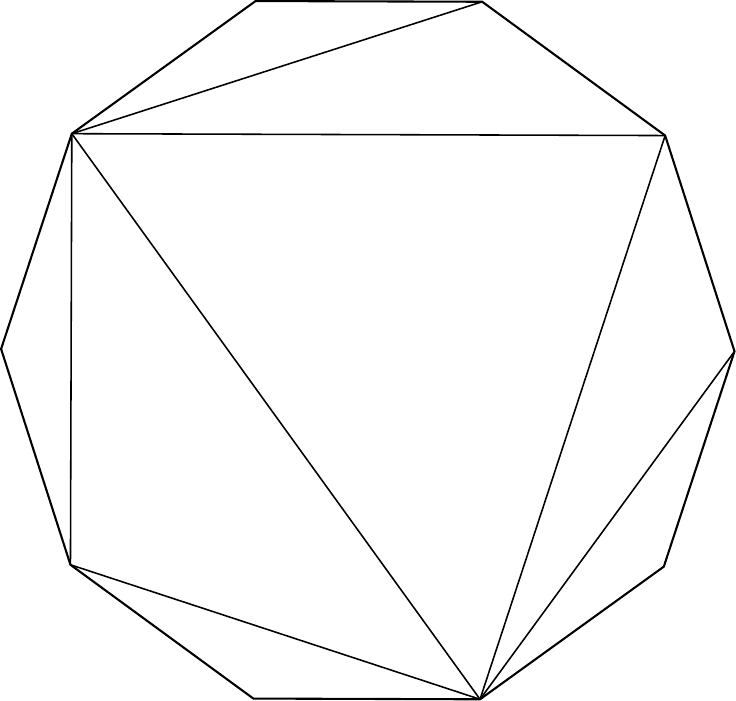}
\end{center}
Again, this is a quiddity cycle that does not appear in the classical theory of Nichols algebras of diagonal type.
\end{Example}

\begin{conje}
Any finite Weyl groupoid of rank two is obtained as the Weyl groupoid of some $\qq\in \CC^{\{1,2\}^\dn}$, $\dn\in\NN$.
\end{conje}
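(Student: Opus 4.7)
My plan is to reduce the conjecture to a linear congruence problem, exploiting the fact that higher tensor powers dramatically expand the parameter space of admissible tensors.

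\textbf{Step 1: Linearization.} In rank two, every Rosso condition depends only on the $(d+1)$-tuple $(\sqrt{q_0}, \ldots, \sqrt{q_d})$. Under the reflection $\sigma_\ell^{\otimes \dn}$, this tuple transforms linearly: one checks directly that $\sigma_\ell$ preserves the span of $\gamma_0, \ldots, \gamma_d$, acting on it as an upper triangular matrix whose entries are polynomials in the current Cartan entry. Choosing a large prime $N$ and entries in $\mu_N$, a tensor becomes a vector $a\in(\ZZ/N)^{d+1}$ via discrete logarithms, and all Rosso (in)equalities become linear (in)equalities modulo $N$.

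\textbf{Step 2: Automatic closure.} A finite quiddity cycle $(m_1,\ldots,m_k)$ corresponds to a triangulation of a convex $k$-gon, and the classical theory shows that the alternating product of lattice reflections $\sigma_{i_k}\cdots\sigma_{i_1}$ is the identity on $\ZZ^2$. Since $(\sigma_{i_k})^{\otimes \dn}\cdots(\sigma_{i_1})^{\otimes \dn}=(\sigma_{i_k}\cdots\sigma_{i_1})^{\otimes \dn}$, the same product equals the identity on $(\ZZ^2)^{\otimes \dn}$. Hence any $a$ realizing the prescribed Cartan entries along one full cycle automatically returns to itself, and the closure condition contributes no additional constraint beyond the Cartan equalities.

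\textbf{Step 3: Counting solutions.} It then suffices to exhibit $a\in(\ZZ/N)^{d+1}$ such that, for each $i$, the condition $R_{m_i}^{(i)}=0$ holds (say, by imposing the single linear equation $\bich_\qq(w_{m_i}^{(i)})=1$) while no earlier Rosso vanishing occurs. The $k$ equalities cut out an affine subspace of dimension $\ge d+1-k$; for $d\ge k$ this subspace has $\ge N^{d+1-k}$ points. Each forbidden event (an earlier vanishing of some $v_m^{(i)}$, $w_m^{(i)}$, or $s_m^{(i)}$ pulled back to the initial $a$) is contained in a proper linear subspace of size $\le N^{d-k}$. Since the total number of forbidden events is $O(\sum_i m_i)$, a union bound for $N$ much larger than $\sum_i m_i$ produces the required $a$, and hence a tensor $\qq\in\mu_N^{\{1,2\}^\dn}\subset\CC^{\{1,2\}^\dn}$.

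\textbf{Main obstacle.} The technical heart is verifying that the pulled-back linear forms at successive steps are genuinely independent and distinct. Because the reflection matrices themselves depend on the prescribed Cartan entries $m_i$, the pullbacks are polynomial expressions in the $m_i$ acting on $\gamma_0,\ldots,\gamma_d$, and there is no a priori reason for them to avoid degenerate coincidences; both the solution subspace could collapse in dimension and a forbidden subspace could swallow it. I would expect independence to follow from non-vanishing of certain Vandermonde-type determinants extractable from the recursion in Theorem \ref{rosso_rec_d}, verifiable in closed form once $d$ is large compared to the combinatorial complexity of the triangulation; this matches the implicitly asymptotic character of the conjecture. Highly symmetric cases (long runs of $1$'s, or bilateral symmetry of the polygon) may require separate treatment by choosing tensors adapted to the symmetry, reducing the effective number of free parameters but also the number of forbidden events.
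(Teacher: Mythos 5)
This statement is a \emph{conjecture} in the paper: the authors offer no proof of it, only two explicit examples for $\dn=4$ and the heuristic that the parameter space grows with $\dn$. So there is no ``paper proof'' to compare against, and your proposal must be judged as an attempt at an open problem. As such, it is a sensible reduction but not a proof. The reduction itself is sound as far as it goes: for $n=2$ the symmetric tensors are parametrized by $(q_0,\ldots,q_\dn)$, the maps $\sigma_\ell^{\otimes\dn}$ do preserve the span of $\gamma_0,\ldots,\gamma_\dn$ (acting triangularly with diagonal $\pm1$), and after taking discrete logarithms in $\mu_N$ every condition $\bich_\qq(v_m)=1$, $\bich_\qq(w_m)=1$, $\bich_\qq(s_m)\ne 1$ becomes a linear (in)equality on $(\ZZ/N)^{\dn+1}$. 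Your Step 2 is also essentially right for even $\dn$ (the monodromy of a quiddity cycle is $\pm\mathrm{Id}$ on $\ZZ^2$, hence $\mathrm{Id}$ on $(\ZZ^2)^{\otimes\dn}$), which matters since the paper's Theorem \ref{wg_from_q} requires $\dn$ even anyway --- a restriction you should state explicitly.

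The genuine gap is exactly the one you flag in your ``main obstacle'' paragraph, and it is not a technicality that can be deferred: the entire argument stands or falls on (a) the $k$ imposed forms $w_{m_1},\ldots,w_{m_k}$, pulled back along the products of reflection matrices, being linearly independent for $\dn\ge k$, and (b) none of the $O(\sum_i m_i)$ forbidden forms vanishing identically on the resulting affine solution space. Both forms and reflection matrices are determined by the prescribed quiddity cycle, so these are concrete but unverified combinatorial identities, and there is real reason to doubt they hold uniformly: for cycles with repeated entries or bilateral symmetry the pulled-back forms visibly satisfy extra linear relations, and the conjugating products of triangular matrices can force a forbidden $v_m$ or $w_m$ into the span of the imposed forms. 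Two further points are glossed over. First, $R_m=0$ is a disjunction, so ensuring minimality of $m_i$ requires forbidding $\bich_\qq(w_m)=1$ \emph{and} the pair ($\bich_\qq(v_m)=1$, $\bich_\qq(s_m)\ne1$) for all $m<m_i$; imposing $\bich_\qq(v_m)\ne1$ is sufficient but adds to the forbidden count and to the degeneracy worry in (b). Second, the Cartan entries must come out right at \emph{every} object of the orbit $A$, not just along one traversal starting from $\qq_0$ with a fixed alternation of $\rho_1,\rho_2$; in rank two this is the reversed cycle and is probably harmless, but it should be checked. Until (a) and (b) are established --- even just for $\dn$ large relative to the cycle, which would suffice for the conjecture --- this remains a programme rather than a proof.
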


It is thus interesting to find a mathematical object similar to a Nichols algebra that would yield the recursion
of Theorem \ref{rosso_rec_d}
for $\dn>2$. This question is the subject of the next section; the idea will be to consider higher commutativity constraints.

\section{Higher tensors from abelian cohomology theory}\label{ab_coh}

In this section, we want to give a motivation for the construction in Section \ref{sec: tensor powers} in terms of higher commutativity constraints. As in the classical case ($d=2$), a higher commutativity constraint (or \textit{braiding}) will be an abelian $(2d-1)$-cocycle. In the diagonal case, these are simply determined by tensors
\[\qq = (q_{i_1,\dots,i_\dn})_{i_1,\dots,i_\dn}\in \KK^{\{1,\dots,n\}^\dn}\]
as in Section \ref{generalized_Rosso}. The numbers
\begin{align*}
 q_k :=\prod_{\substack{i_1,\dots,i_\dn\in\{\ell,j\}\\ |\{\nu\:\mid\: i_\nu=j\}|=k}} q_{i_1,\dots,i_\dn},
\end{align*}
from Definition~\ref{def: defn of qk's} can then be reinterpreted as invariants with respect to $(2d-1)$st abelian cohomology. Before we introduce higher commutativity constraints, we want to revisit the classical case in order to see where the third abelian cohomology (which we assume to be known) comes into play.

\subsection{Nichols algebras as graded vector spaces}

Let $G$ be an abelian group and let $\mathsf{Vect}_G$ denote the category of finite-dimensional $G$-graded vector spaces over a field $\KK$.

It is well-known \cite[Example~8.4.8]{b-EGNO-15} that braided monoidal structures on $\mathsf{Vect}_G$ are classified (up to braided monoidal equivalence) by the third abelian cohomology $H^3_1\big(G,\KK^\times\big)$. The construction goes as follows. Let $(\omega,\theta) \in Z^3_1\big(G,\KK^\times\big)$ be an abelian 3-cocycle. For $g,h,k \in G$, let~$V_g$,~$V_h$ and~$V_k$ be the corresponding simple objects in $\mathsf{Vect}_G$. Then we can define an associator~$a$ and a braiding $c$ on $\mathsf{Vect}_G$ as follows:
\begin{align*}
 a_{g,h,k}\colon \ (V_g\otimes V_h) \otimes V_k &\longrightarrow V_g\otimes (V_h \otimes V_k),\\
 \qquad e_g\otimes e_h \otimes e_k &\longmapsto \omega(g,h,k) \, e_g\otimes e_h \otimes e_k,\\
 c_{g,h}\colon \ V_g \otimes V_h &\longrightarrow V_h \otimes V_g,\\
 e_g\otimes e_h &\longmapsto \theta( g,h)\, e_h \otimes e_g.
\end{align*}
Here, the tensor product of simple objects $V_g$ and $V_h$ is simply given by $V_g \otimes V_h := V_{gh}$. We denote the resulting braided monoidal category by $\mathsf{Vect}_G^{(\omega,\theta)}$.

If the (ordinary) 3-cocycle $\omega$ is trivial, then we can read off Table \ref{tab:generators of abelian chain complex} ($n=4$, $k=1$) that $\theta$ must be a bicharacter. Hence, we can define a $G$-action on every simple object $V_g \in \mathsf{Vect}_G^{(1,\theta)}$ by setting $h.e_g:=\theta(g,h)\,e_g$. For this reason, any object in $\mathsf{Vect}_G^{(1,\theta)}$ can be regarded as a~Yetter--Drinfeld module over $G$, so that we are in the setting of Section \ref{sec: Nichols algebras}. In particular, for $g_1, \dots, g_n \in G$ the object
\begin{align*}
 V=\oplus_{i=1}^n\,V_{g_i} \in \mathsf{Vect}_G^{(1,\theta)}
\end{align*}
is a diagonally braided vector space with braid matrix
\begin{align*}
 \qq = (q_{i,j})_{i,j} := (\theta(g_i,g_j))_{i,j}
\end{align*}
and we can define the Nichols algebra $\BB(V)$ of~$V$. However, it seems natural to us to think of~$\BB(V)$ as a Hopf algebra inside the braided monoidal category $\mathsf{Vect}_G^{(1,\theta)}$ because its construction (including the operators $\partial$ and $\ad$) completely takes place in $\mathsf{Vect}_G^{(1,\theta)}$.
As pointed out by a~referee, this has also been done recently in \cite[Section~5.2]{MR4498166}.

\subsection{Abelian cohomology theory}

In this subsection, we recapitulate the cohomology theory of abelian groups first introduced by Eilenberg and MacLane in \cite{Eilenberg1950, Eilenberg1950a, MacLane1952}. They realized that the homology theory appropriate to an \textit{abelian} group\footnote{We write $\Pi$ instead of $G$ here because this is the usual symbol for abelian cohomology.} $\Pi$ is not given by the ordinary bar complex $A_\bullet(\Pi)$ of the group ring $\ZZ[\Pi]$, as the proof of the condition $\partial \partial=0$ only uses associativity of $\Pi$ but not commutativity. In order to resolve this issue, they defined further cell complexes $A^k_\bullet(\Pi)$, where $k \in \ZZ_{\geq 0}$ can be thought of as the \textit{level of commutativity} which is taken into account. In particular, we have $A_\bullet(\Pi)=A^0_\bullet(\Pi)$.

We will reproduce the inductive definition of the cell complexes $A^k_\bullet(\Pi)$ from \cite{Eilenberg1950a}:

Let $A_\bullet(\Pi)$ be the ordinary bar complex of the abelian group $\Pi$.

More precisely, $A_n(\Pi)$ is the free $\ZZ$-module generated by $n$-tuples $[x_1,\dots,x_n]$ of elements $x_i \in \Pi$. The boundary operators $\partial_n\colon A_n(\Pi) \to A_{n-1}(\Pi)$ are given by
\begin{align*}
 \partial_n [x_1,\dots,x_n] := [x_2, \dots,x_n] + \sum_{i=1}^{n-1}\,(-1)^i [x_1, \dots,x_{i}x_{i+1},\dots,x_n] + (-1)^n[x_1,\dots,x_{n-1}].
\end{align*}

Since this is the basis of our induction, we set
\begin{align*}
 A^0_\bullet(\Pi)=A_\bullet(\Pi), \qquad \partial^0_n = \partial_n \qquad \text{and} \qquad [x_1 \, |_0 \dots |_0 \, x_n]=[x_1, \dots , x_n].
\end{align*}
As a free $\ZZ$-module, $A^k_n(\Pi)$ is generated by $p$-tuples $[\alpha_1 \, |_k \dots |_k \, \alpha_p]$, where the $\alpha_i$ are generators of $A^{k-1}_{n_i}(\Pi)$, such that
\begin{align*}
 n= \sum_{i=1}^p \,n_i + (p-1)k.
\end{align*}
Hence, we can think of the symbol $|_k$ as a placeholder for $k$ arguments. In particular, we have a stabilizing chain
\begin{align}\label{eq: chain of cell complexes}
 A^0_n(\Pi) \leq A^1_n(\Pi) \leq \dots \leq A^{n-2}_n(\Pi) = A^{n-1}_n(\Pi) = \cdots
\end{align}
of free $\ZZ$-modules.

The graded $\ZZ$-module $A^k_\bullet(\Pi)=\bigoplus_{n=0}^\infty \, A^k_n(\Pi)$ is endowed with a so-called {\it $k$-shuffle product}
\begin{align*}
 \ast_k\colon \ A^k_n(\Pi) \times A^k_m(\Pi) \to A^k_{n+m+k}(\Pi).
\end{align*}
For $\alpha=[\alpha_1 \, |_k \dots |_k \, \alpha_p] \in A^k_n(\Pi)$ and $\beta =[\beta_1 \, |_k \dots |_k \, \beta_q] \in A^k_m(\Pi)$, a~shuffle of $\alpha$ and $\beta$ is a~$(p+q)$-tuple $\gamma=[\gamma_1 \, |_k \dots |_k \, \gamma_{p+q}] \in A^k_{n+m+k}(\Pi)$ obtained by permuting the $(p+q)$-tuple
\begin{align*}
 [\alpha_1 \, |_k \dots |_k \, \alpha_p \, |_k \,\beta_1 \, |_k \dots |_k \, \beta_q]
\end{align*}
in a way that preserves the order of the $\alpha_i$'s and $\beta_j$'s, respectively. To such a shuffle we can associate a number
\begin{align*}
 \epsilon_\gamma:= \sum_{\text{$(i,j)$ such that $\alpha_i$ is after $\beta_j$ in $\gamma$}} \, (n_i+k)(n_j+k).
\end{align*}
The $k$-shuffle product on $A^k_\bullet(\Pi)$ is then defined as follows:
\begin{align*}
 [\alpha_1 \, |_k \dots |_k \, \alpha_p] \ast_k [\beta_1 \, |_k \dots |_k \, \beta_q] := \sum_{\text{shuffles $\gamma$ of $\alpha$ and $\beta$}} \, (-1)^{\epsilon_\gamma}[\gamma_1 \, |_k \dots |_k \, \gamma_{p+q}].
\end{align*}

Having introduced the $k$-shuffle product, we are now ready to define the boundary operators $\partial^k_n\colon A^k_n(\Pi) \to A^k_{n-1}(\Pi)$:
\begin{align}
 \partial^k_n[\alpha_1 \, |_k \dots |_k \,\alpha_p]:=&{} \sum_{i=1}^p \, (-1)^{a_{i-1}}[\alpha_1 \, |_k \dots |_k \, \partial^{k-1}\alpha_i \, |_k \dots |_k \, \alpha_p]\nonumber\\
 &{}+\sum_{i=1}^{p-1}\,(-1)^{a_i}[\alpha_1 \, |_k \dots |_k \,\alpha_{i-1} \, |_k \, \alpha_i \ast_{k-1} \alpha_{i+1} \, |_k \, \alpha_{i+2} \, |_k \dots |_k \, \alpha_p],\label{def: boundary operator of abelian chains}
\end{align}
where
\[
a_i=\sum_{j=1}^i  n_j + i\cdot k.
\] For the proof that $\partial^k_{n-1}\partial^k_n = 0$ holds, we refer to \cite{Eilenberg1950a}.

We refer to the chain complex $A^k_\bullet(\Pi)$ as the {\it abelian complex of $\Pi$ of commutativity level $k$}.

\begin{conve}
 For an abelian group $A$, we will denote the cochain complex $\Hom(A^k_\bullet(\Pi),A)$ by $A_k^\bullet(\Pi,A)$ with coboundary operators $\delta_k^{n-1}:=\big(\partial_n^k\big)^*$. As usual, for the subgroups of (co)cycles and (co)boundaries we replace the letter $A$ by the letters $Z$ and $B$, respectively. For the homology and cohomology groups we use the notation $H^k_\bullet(\Pi)$ and $H_k^\bullet(\Pi,A)$, respectively.
\end{conve}

\begin{Example}\allowdisplaybreaks
In this example, we list generators of $A^k_n(\Pi)$ and their boundaries for $n \in \{1,2,3,4,5,6\}$, since we are going to use them repeatedly in the following section. Moreover, we believe that an explicit list can be helpful for other people dealing with abelian chain complexes independently from our motivation.

Due to the inclusions in sequence \eqref{eq: chain of cell complexes} it is sufficient to only specify the new generators appearing for every $k \in \ZZ_{\geq 0}$.

\renewcommand{\arraystretch}{1.1}
\begin{longtable}{llll}
\toprule[1.5pt]
$n$ & $k$ & Generator & Boundary \\
\midrule[1.0pt]
1 & -- &$[a]$ & $\partial [a]=0$ \\
\midrule[1.5pt]
2 & 0 & $[a,b]$ & $\partial [a,b]=[a] - [ab] + [b]$ \\
\midrule[1.5pt]
3 & 0 & $[a,b,c]$ & $\partial [a,b,c]=[b,c] - [ab,c] + [a,bc]-[a,b]$ \\
\midrule[1pt]
 & 1 & $[a|b]$ & $\partial [a|b]=[a,b] - [b,a]$ \\
\midrule[1.5pt]
4 & 0 & $[a,b,c,d]$ &
		 $\!\begin{aligned}[t]
		 \partial [a,b,c,d]={}&[b,c,d] - [ab,c,d] + [a,bc,d]\\
		 &{}-[a,b,cd] + [a,b,c]
		 \end{aligned}$ \\
\midrule[1pt]
 & 1 & $[a,b|c]$ &
		 $\!\begin{aligned}[t]
 \partial [a,b|c]={}&[b|c] -[ab|c] + [a|c] \\
 &{}- [a,b,c] + [a,c,b] - [c,a,b]
 \end{aligned}$\\
\midrule[0.5pt]
 & & $[a|b,c]$ &
		 $\!\begin{aligned}[t]
 \partial [a|b,c]={}&[a|c] - [a|bc] + [a|b] \\
 &+ [a,b,c] - [b,a,c] + [b,c,a]
 \end{aligned}$\\
\midrule[1pt]
 & 2 & $[a||b]$ & $\partial [a||b]=-[a|b] - [b|a]$ \\
\midrule[1.5pt]
5 & 0 &$[a,b,c,d,e]$ &
		 $\!\begin{aligned}[t]
 \partial[a,b,c,d,e] ={}&[b,c,d,e]-[ab,c,d,e]+[a,bc,d,e] \\
 &{}-[a,b,cd,e]+[a,b,c,de]-[a,b,c,d]
 \end{aligned}$\\
\midrule[1pt]
 & 1 & $[a,b,c|d]$ &
		 $\!\begin{aligned}[t]
 \partial [a,b,c|d]={}&[b,c|d]-[ab,c|d]+[a,bc|d]-[a,b|d] \\ &{}+[a,b,c,d]-[a,b,d,c]+[a,d,b,c]-[d,a,b,c]
 \end{aligned}$\\
\midrule[0.5pt]
 & & $[a,b|c,d]$ &
		 $\!\begin{aligned}[t]
 \partial [a,b|c,d]={}& [b|c,d] - [ab|c,d] +[a|c,d]-[a,b|d]\\
 &{}+ [a,b|cd]-[a,b|c]-[a,b,c,d]\\
 &{}+[a,c,b,d]-[c,a,b,d]-[a,c,d,b]\\
 &{}+[c,a,d,b]-[c,d,a,b]
 \end{aligned}$\\
\midrule[0.5pt]
 & & $[a|b,c,d]$ &
		 $\!\begin{aligned}[t]
 \partial [a|b,c,d]={}&[a|c,d] -[a|bc,d] + [a|b,cd] - [a|b,c] \\
 &{}+ [a,b,c,d] - [b,a,c,d] + [b,c,a,d] - [b,c,d,a]
 \end{aligned}$\\
\midrule[0.5pt]
 & & $[a|b|c]$ &
		 $\!\begin{aligned}[t]
 \partial [a|b|c]&=[a,b|c]-[b,a|c]+[a|b,c]-[a|c,b]
 \end{aligned}$\\
\midrule[1pt]
 & 2 & $[a,b||c]$ &
		 $\!\begin{aligned}[t]
		 \partial [a,b||c]=[b||c] - [ab||c] + [a||c]+[a,b|c]+[c|a,b]
		 \end{aligned}$\\
\midrule[0.5pt]
 & & $[a||b,c]$ &
		 $\!\begin{aligned}[t]
		 \partial [a||b,c]=-[a||c] + [a||bc] - [a||b]-[a|b,c]-[b,c|a]
		 \end{aligned}$\\
\midrule[1pt]
 & 3 & $[a|||b]$ &
		 $\!\begin{aligned}[t]
		 \partial [a|||b]&=[a||b]-[b||a]
		 \end{aligned}$\\
\midrule[1.5pt]
6 & 0 & $[a,b,c,d,e,f]$ &
		 $\!\begin{aligned}[t]
 \partial[a,b,c,d,e,f] ={}&[b,c,d,e,f]-[ab,c,d,e,f]+[a,bc,d,e,f]\\
 &{}-[a,b,cd,e,f]+[a,b,c,de,f]\\
&{}-[a,b,c,d,ef]+[a,b,c,d,e]
 \end{aligned}$\\
\midrule[1pt]
 & 1 & $[a,b,c,d|e]$ &
		 $\!\begin{aligned}[t]
 \partial[a,b,c,d|e] ={}& [b,c,d|e] - [ab,c,d|e] + [a,bc,d|e]-[a,b,cd|e] \\
 &{}+ [a,b,c|e]-[a,b,c,d,e]+[a,b,c,e,d] \\
 &{}-[a,b,e,c,d]+[a,e,b,c,d] -[e,a,b,c,d]
 \end{aligned}$\\
\midrule[0.5pt]
 & & $[a,b,c|d,e]$ &
		 $\!\begin{aligned}[t]
 \partial[a,b,c|d,e] ={}& [b,c|d,e] - [ab,c|d,e] + [a,bc|d,e] - [a,b|d,e] \\
 &{}+[a,b,c|e] - [a,b,c|de] + [a,b,c|d]+[a,b,c,d,e] \\
 &{}- [a,b,d,c,e]+[a,d,b,c,e] - [d,a,b,c,e]\\
 &{}+[a,b,d,e,c] - [a,d,b,e,c] + [d,a,b,e,c]\\
 &{}+[a,d,e,b,c] - [d,a,e,b,c]+[d,e,a,b,c]
 \end{aligned}$\\
\midrule[0.5pt]
 & & $[a,b|c,d,e]$ &
		 $\!\begin{aligned}[t]
 \partial[a,b|c,d,e] ={}& [b|c,d,e] - [ab|c,d,e] + [a|c,d,e]-[a,b|d,e] \\
 &{}+[a,b|cd,e]-[a,b|c,de]+[a,b|c,d]-[a,b,c,d,e] \\
 &{} + [a,c,b,d,e]-[a,c,d,b,e] + [a,c,d,e,b]\\
 &{}-[c,a,b,d,e] + [c,a,d,b,e] - [c,a,d,e,b]\\
 &{}-[c,d,a,b,e] + [c,d,a,e,b]-[c,d,e,a,b]
 \end{aligned}$\\
\midrule[0.5pt]
 & & $[a|b,c,d,e]$ &
		 $\!\begin{aligned}[t]
 \partial[a|b,c,d,e] ={}& [a|c,d,e] - [a|bc,d,e] + [a|b,cd,e]-[a|b,c,de]\\
 &{}+[a|b,c,d]+[a,b,c,d,e]-[b,a,c,d,e]\\
 &{}+[b,c,a,d,e]-[b,c,d,a,e]+[b,c,d,e,a]
 \end{aligned}$\\
\midrule[0.5pt]
 & & $[a,b|c|d]$ &
		 $\!\begin{aligned}[t]
 \partial[a,b|c|d] ={}& [b|c|d]-[ab|c|d]+[a|c|d]-[a,b,c|d]\\
 &{}+[a,c,b|d]-[c,a,b|d]-[a,b|c,d]+[a,b|d,c]
 \end{aligned}$\\
\midrule[0.5pt]
 & & $[a|b,c|d]$ &
		 $\!\begin{aligned}[t]
 \partial[a|b,c|d] ={}& [a|c|d]-[a|bc|d]+[a|b|d]+[a,b,c|d]-[b,a,c|d]\\
 &{}+[b,c,a|d]-[a|b,c,d]+[a|b,d,c]-[a|d,b,c]
 \end{aligned}$\\
\midrule[0.5pt]
 & & $[a|b|c,d]$ &
		 $\!\begin{aligned}[t]
 \partial[a|b|c,d] ={}& [a|b|d] -[a|b|cd] + [a|b|c]+[a,b|c,d]-[b,a|c,d] \\
 &{}+[a|b,c,d]-[a|c,b,d]+[a|c,d,b]
 \end{aligned}$\\
\midrule[1pt]
 & 2 & $[a,b,c||d]$ &
		 $\!\begin{aligned}[t]
		 \partial [a,b,c||d] ={}& [b,c ||d] - [ab,c||d] + [a,bc||d] - [a,b||d] \\
		 &{}-[a,b,c | d] -[d|a,b,c]
		 \end{aligned}$\\
\midrule[0.5pt]
 & & $[a,b||c,d]$ &
		 $\!\begin{aligned}[t]
		 \partial [a,b||c,d] ={}& [b||c,d] - [ab||c,d] + [a||c,d]+[a,b||d]\\
		 &{}- [a,b||cd] + [a,b||c]+[a,b|c,d]-[c,d|a,b]
		 \end{aligned}$\\
\midrule[0.5pt]
 & & $[a||b,c,d]$ &
		 $\!\begin{aligned}[t]
		 \partial [a||b,c,d] ={}&-[a||c,d] + [a||bc,d] - [a||b,cd] + [a||b,c] \\
		 &{}-[a|b,c,d]-[b,c,d|a]
		 \end{aligned}$\\
\midrule[0.5pt]
 & & $[a|b||c]$ &
		 $\!\begin{aligned}[t]
		 \partial [a|b||c]&= [a,b||c] - [b,a||c]-[a|b|c]-[a|c|b]-[c|a|b]
		 \end{aligned}$\\
\midrule[0.5pt]
 & & $[a||b|c]$ &
		 $\!\begin{aligned}[t]
		 \partial [a||b|c] = -[a||b,c]+[a||c,b]-[a|b|c]-[b|a|c]-[b|c|a]
		 \end{aligned}$\\
\midrule[1pt]
 & 3 & $[a,b|||c]$ &
		 $\!\begin{aligned}[t]
		 \partial [a,b|||c] =[b|||c] - [ab|||c] + [a|||c]-[a,b||c]-[c||a,b]
		 \end{aligned}$\\
\midrule[0.5pt]
 & & $[a|||b,c]$ &
		 $\!\begin{aligned}[t]
		 \partial [a|||b,c]={}&[a|||c] - [a|||bc] + [a|||b]\\
		 &{}-[a||b,c]+[b,c||a]
		 \end{aligned}$\\
\midrule[1pt]
 & 4 & $[a||||b]$ &
		 $\!\begin{aligned}[t]
		 \partial[a||||b] = -[a||||b]-[b||||a]
		 \end{aligned}$\\
\bottomrule[1.5pt]
\caption{Generators of $A^k_n(\Pi)$ for $1 \leq n \leq 6$.}\label{tab:generators of abelian chain complex}
\end{longtable}
\end{Example}

\subsection{Invariants for abelian cohomology of commutativity level 1}

In this section, we study the complex $A^1_\bullet(\Pi)$ in more detail. In particular, we study a certain family of chains $c_\lambda \in A^1_{2d-1}(\Pi)$ indexed by compositions $\lambda$ of $d \in \NN$.

Let $\lambda = (\lambda_1, \dots, \lambda_p)$ be a $p$-composition of $d$. We define $\{\alpha_1;\dots ;\alpha_p\}_\lambda \in A_{2d-1}^1(\Pi)$ to be the sum over all generators $[\beta_1 |\dots |\beta_d] \in A_n^1(\Pi)$, where the arguments $(\beta_1, \dots,\beta_d)$ run over all multiset permutations of the multiset
\begin{align*}
 \{\alpha_1, \dots, \alpha_1,\alpha_2, \dots, \alpha_2, \dots, \alpha_p, \dots , \alpha_p \}
\end{align*}
in which every element $\alpha_i$ appears $\lambda_i$ times. In particular, $\{a_1;\dots ;a_p\}_\lambda \in A_{2d-1}^1(\Pi)$ has $\frac{d!}{\lambda_1! \cdots \lambda_p!}$ summands.

\begin{Remark}\label{rem: symmetric chains don't depend on order of composition}
 Clearly, the chains $\{\alpha_1;\dots ;\alpha_p\}_\lambda \in A_{2d-1}^1(\Pi)$ depend on the order of the $p$-composition $\lambda=(\lambda_1, \dots, \lambda_p)$ only up to the order of the arguments. That is, for $\sigma \in S_p$, we have
 \begin{align*}
 \{\alpha_{\sigma(1)};\dots;\alpha_{\sigma(p)}\}_{(\lambda_{\sigma(1)},\dots,\lambda_{\sigma(p)})}=\{\alpha_1;\dots; \alpha_p\}_\lambda.
 \end{align*}
 However, we will not restrict ourselves to proper partitions for reasons that will later become clear.
\end{Remark}

\begin{Example}
For $\lambda=(2,2)$, we have
\begin{align*}
 \{ \alpha_1; \alpha_2\}_\lambda ={}& [\alpha_1|\alpha_1|\alpha_2|\alpha_2] + [\alpha_1|\alpha_2|\alpha_1|\alpha_2] + [\alpha_1|\alpha_2|\alpha_2|\alpha_1] \\
 &{}+ [\alpha_2|\alpha_1|\alpha_1|\alpha_2] + [\alpha_2|\alpha_1|\alpha_2|\alpha_1] + [\alpha_2|\alpha_2|\alpha_1|\alpha_1].
\end{align*}
\end{Example}

\begin{Lemma}\label{eq: symmetric chains are cycles}
Let $\lambda$ be a $p$-composition of $d \in \NN$ and $\{\alpha_1,\dots,\alpha_p\} \subseteq \Pi$.
Then, the chain $\{\alpha_1;\dots ;\alpha_p\}_\lambda \in A_{2d-1}^1(\Pi)$ is a cycle, i.e.,
\begin{gather*}
 \partial \{a_1;\dots ;a_p\}_\lambda =0.
\end{gather*}
\end{Lemma}

\begin{proof}
 Applying Definition \ref{def: boundary operator of abelian chains}, we obtain the general formula{\samepage
 \begin{align*}
 \partial [\beta_1|\dots| \beta_d] &= \sum_{i=1}^{d-1}\,(-1)^{a_i}[\beta_1|\dots|\beta_i \ast_0 \beta_{i+1} | \dots | \beta_d] \\
 &= \sum_{i=1}^{d-1}\big([\beta_1|\dots|\beta_i, \beta_{i+1} | \dots | \beta_d] - [\beta_1|\dots|\beta_{i+1}, \beta_i | \dots | \beta_d] \big).
 \end{align*}
 Note that $a_i=2i$ is always even in the considered case.}

 Looking at this formula, we notice that the $i$th summand of $\partial [\beta_1|\dots| \beta_d]$ vanishes iff $\beta_i=\beta_{i+1}$. If $\beta_i \neq \beta_{i+1}$, then $[\beta_1|\dots|\beta_{i+1}|\beta_i| \dots | \beta_d]$ is the unique summand of $\{\alpha_1;\dots ;\alpha_p\}_\lambda$ such that the $i$th summand of its boundary cancels out the $i$th summand of $\partial [\beta_1|\dots| \beta_d]$. Since this is a~one-to-one correspondence between non-vanishing summands of $\partial \{\alpha_1;\dots ;\alpha_p\}_\lambda$, the statement follows.
\end{proof}

In the following, the elements $\{\alpha_1;\dots ;\alpha_p\}_\lambda \in Z_{2d-1}^1(\Pi)$ will be referred to as {\it symmetrized cycles}.

We will now explore how symmetrized cycles are related to each other. We start by proving identities for small $m \in \NN$.

\begin{Lemma}
 For $\alpha,\beta,\gamma \in \Pi$ we have the following identities in $H_{3}^1(\Pi)$:
 \begin{align*}
 &\big[\{\alpha\beta\}_{(2)} \big]=\big[\{\alpha\}_{(2)}\big]+\big[\{\beta\}_{(2)}\big]+\big[\{\alpha;\beta\}_{(1,1)}\big],\\
& \big[\{\alpha\beta;\gamma\}_{(1,1)}\big]=\big[\{\alpha;\gamma\}_{(1,1)}\big]+\big[\{\beta;\gamma\}_{(1,1)}\big],\\
 &\big[\{\alpha\}_{(2)}\big]=\big[\big\{\alpha^{-1}\big\}_{(2)}\big].
 \end{align*}
\end{Lemma}

\begin{proof}
 Using the definition of the boundary operator, one can easily verify that the element
 \begin{align*}
 -\{\alpha\beta\}_{(2)}+\{\alpha\}_{(2)}+\{\beta\}_{(2)}+\{\alpha;\beta\}_{(1,1)}
 \end{align*}
 is the boundary of the following element in $A_{4}^1(\Pi)$:
 \begin{align*}
 [\alpha,\beta|\alpha\beta]+[\alpha|\beta,\alpha]+[\beta|\alpha,\beta]-[\alpha,\beta,\alpha,\beta].
 \end{align*}
 Moreover, we have
 \begin{align*}
 \partial\big([\alpha,\beta|\gamma]+[\gamma|\alpha,\beta]\big)= -\{\alpha\beta;\gamma\}_{(1,1)}+ \{\alpha;\gamma\}_{(1,1)}+\{\beta;\gamma\}_{(1,1)}
 \end{align*}
 and
 \begin{align*}
 \partial\big( \big[\alpha|\alpha^{-1},\alpha\big]-\big[\alpha^{-1},\alpha|\alpha^{-1}\big]+\big[\alpha,\alpha^{-1},\alpha,\alpha^{-1}\big] \big)=\{\alpha\}_{(2)}-\big\{\alpha^{-1}\big\}_{(2)}.\tag*{\qed}
 \end{align*}
\renewcommand{\qed}{}\end{proof}

Using the symmetry from Remark \ref{rem: symmetric chains don't depend on order of composition}, the following corollary follows immediately from the previous lemma.

\begin{Corollary}\label{cor: n-form for d=1}
 For $\alpha_1,\alpha_2,\alpha_3 \in \Pi$, we have
 \begin{align*}
 0={}&\big[\{\alpha_1\alpha_2\alpha_3\}_{(2)} \big]- \big[\{\alpha_1\alpha_2\}_{(2)} \big]-\big[\{\alpha_1\alpha_3\}_{(2)} \big]-\big[\{\alpha_2\alpha_3\}_{(2)} \big]\\
 &{}+\big[\{\alpha_1\}_{(2)} \big]+\big[\{\alpha_2\}_{(2)} \big]+\big[\{\alpha_3\}_{(2)} \big].
 \end{align*}
\end{Corollary}

\begin{Lemma}
 For $\alpha, \beta, \gamma,\delta \in \Pi$, we have the following identities in $H_{5}^1(\Pi)$:
 \begin{gather*}
 \big[\{\alpha\beta\}_{(3)} \big]=\big[\{\alpha\}_{(3)}\big]+\big[\{\beta\}_{(3)}\big]+\big[\{\alpha;\beta\}_{(2,1)}\big]+\big[\{\beta;\alpha\}_{(2,1)}\big],\\
 \big[\{\alpha\beta;\gamma\}_{(2,1)}\big]=\big[\{\alpha;\gamma\}_{(2,1)}\big]+\big[\{\beta;\gamma\}_{(2,1)}\big]+\big[\{\alpha;\beta;\gamma\}_{(1,1,1)} \big],\\
 \big[\{\alpha;\beta\gamma\}_{(2,1)}\big]=\big[\{\alpha;\beta\}_{(2,1)}\big]+\big[\{\alpha;\gamma\}_{(2,1)}\big], \\
 \big[\{\alpha\beta;\gamma;\delta\}_{(1,1,1)}\big]= \big[\{\alpha;\gamma;\delta\}_{(1,1,1)}\big]+\big[\{\beta;\gamma;\delta\}_{(1,1,1)}\big]\\
 \big[\{\alpha\}_{(3)}\big]=-\big[\big\{\alpha^{-1}\big\}_{(3)}\big]\\
 \big[\{\alpha;\beta\}_{(2,1)}\big]=\big[\big\{\alpha^{-1};\beta\big\}_{(2,1)}\big].
 \end{gather*}
\end{Lemma}

\begin{proof}
 A long and tedious calculation shows that
 \begin{align*}
 -\{\alpha\beta\}_{(3)}+\{\alpha\}_{(3)}+\{\beta\}_{(3)}+\{\alpha;\beta\}_{(2,1)}+\{\beta;\alpha\}_{(2,1)}
 \end{align*}
 is the boundary of the following element in $A_{6}^1(\Pi)$:
 \begin{gather*}
 [\alpha,\beta|\alpha\beta|\alpha \beta]+[\beta|\alpha,\beta|\alpha \beta]+[\alpha|\beta,\alpha|\beta\alpha]+[\beta|\alpha|\alpha,\beta]+[\alpha|\beta|\alpha,\beta]+[\alpha|\alpha|\alpha,\beta]+[\beta|\beta|\alpha,\beta]\\
 \qquad{}-[\alpha,\beta,\alpha,\beta|\alpha\beta]-[\beta|\alpha,\beta,\alpha,\beta]-[\alpha|\beta,\alpha,\beta,\alpha]
 +[\alpha,\beta,\alpha,\beta,\alpha,\beta].
 \end{gather*}
 Another calculation shows that
 \begin{align*}
 -\{\alpha\beta;\gamma\}_{(2,1)}+\{\alpha;\gamma\}_{(2,1)}+\{\beta;\gamma\}_{(2,1)}+\{\alpha;\beta;\gamma\}_{(1,1,1)}
 \end{align*}
 is the boundary of the following element in $A_{6}^1(\Pi)$:
 \begin{gather*}
 [\alpha,\beta|\alpha\beta|\gamma]+[\alpha,\beta|\gamma|\alpha\beta]+[\gamma|\alpha,\beta|\alpha\beta]+[\gamma|\beta|\alpha,\beta]
 +[\beta|\gamma|\alpha,\beta]+[\gamma|\alpha|\beta,\alpha]+[\alpha|\gamma|\beta,
 \alpha]\\
 \qquad{}+[\alpha|\beta,\alpha|\gamma]+[\beta|\alpha,\beta|\gamma]-[\alpha,\beta,\alpha,\beta|\gamma]-[\gamma|\alpha,\beta,\alpha,\beta].
 \end{gather*}
 Moreover, we have
 \begin{align*}
 \partial \big([\alpha|\alpha|\beta,\gamma]+[\alpha|\beta,\gamma|\alpha]+[\beta,\gamma|\alpha|\alpha] \big)= -\{\alpha;\beta\gamma\}_{(2,1)}+\{\alpha;\beta\}_{(2,1)}+\{\alpha;\gamma\}_{(2,1)}.
 \end{align*}
 The fourth identity follows from the element
 \begin{align*}
 -\{\alpha\beta;\gamma;\delta\}_{(1,1,1)}+\{\alpha;\gamma;\delta\}_{(1,1,1)}+\{\beta;\gamma;\delta\}_{(1,1,1)}
 \end{align*}
 being the boundary of the following element in $A_{6}^1(\Pi)$:
 \begin{gather*}
 [\alpha,\beta|\gamma|\delta]+[\gamma|\alpha,\beta|\delta]+[\gamma|\delta|\alpha,\beta]+[\alpha,\beta|\delta|\gamma]+[\delta|\alpha,\beta|\gamma]+[\delta|\gamma|\alpha,\beta].
 \end{gather*}
 Furthermore, the element
 \begin{align*}
 \{\alpha\}_{(3)}+ \big\{\alpha^{-1}\big\}_{(3)}
 \end{align*}
 is the boundary of the following element in $A_{6}^1(\Pi)$:
 \begin{gather*}
 \big[\alpha^{-1}|\alpha^{-1}|\alpha^{-1},\alpha\big]-\big[\alpha^{-1}|\alpha^{-1},\alpha|\alpha\big]+\big[\alpha^{-1},\alpha|\alpha|\alpha\big]-\big[\alpha^{-1}|\alpha,\alpha^{-1},\alpha,\alpha^{-1}\big]\\ \qquad{}+\big[\alpha,\alpha^{-1},\alpha,\alpha^{-1}|\alpha\big] +\big[\alpha,\alpha^{-1},\alpha,\alpha^{-1},\alpha,\alpha^{-1}\big].
 \end{gather*}
 Finally, the element
 \begin{align*}
 \{\alpha;\beta\}_{(2,1)}-\big\{\alpha^{-1};\beta\big\}_{(2,1)}
 \end{align*}
 is the boundary of the following element in $A_{6}^1(\Pi)$:
 \begin{gather*}
 \big[\alpha^{-1}|\alpha^{-1},\alpha|\beta\big]-\big[\alpha^{-1},\alpha|\alpha|\beta\big]+\big[\alpha^{-1}|\beta|\alpha^{-1},\alpha\big]-\big[\alpha^{-1},\alpha|\beta|\alpha\big]\\
 \qquad{}+\big[\beta|\alpha^{-1}|\alpha^{-1},\alpha\big]-\big[\beta|\alpha^{-1},\alpha|\alpha\big]-\big[\beta|\alpha,\alpha^{-1},\alpha,\alpha^{-1}\big]-\big[\alpha,\alpha^{-1},\alpha,\alpha^{-1}|\beta\big].\tag*{\qed}
 \end{gather*}
\renewcommand{\qed}{}\end{proof}

Again, using the symmetry from Remark \ref{rem: symmetric chains don't depend on order of composition}, the following corollary follows immediately from the previous lemma.

\begin{Corollary} \label{cor: n-form for d=2}
 For $\alpha_1,\alpha_2,\alpha_3,\alpha_4 \in \Pi$, we have
 \begin{gather*}
 0=\big[\{\alpha_1\alpha_2\alpha_3\alpha_4\}_{(3)} \big]-\big[\{\alpha_1\alpha_2\alpha_3\}_{(3)} \big]
 -\big[\{\alpha_1\alpha_2\alpha_4\}_{(3)} \big]
-\big[\{\alpha_1\alpha_3\alpha_4\}_{(3)} \big]\nonumber\\
 \phantom{0=}{}-\big[\{\alpha_2\alpha_3\alpha_4\}_{(3)} \big]
 +\big[\{\alpha_1\alpha_2\}_{(3)} \big]
+\big[\{\alpha_1\alpha_3\}_{(3)} \big]
 +\big[\{\alpha_1\alpha_4\}_{(3)} \big]
+\big[\{\alpha_2\alpha_3\}_{(3)} \big]\nonumber\\
\phantom{0=}{}+\big[\{\alpha_2\alpha_4\}_{(3)} \big]
 +\big[\{\alpha_3\alpha_4\}_{(3)} \big]
 -\big[\{\alpha_1\}_{(3)} \big]
-\big[\{\alpha_2\}_{(3)} \big]
 -\big[\{\alpha_3\}_{(3)} \big]
 -\big[\{\alpha_4\}_{(3)} \big],
 \\
 0=\big[\{\alpha_1\alpha_2\alpha_3;\alpha_4\}_{(2,1)}\big]-\big[\{\alpha_1\alpha_2;\alpha_4\}_{(2,1)}\big]-\big[\{\alpha_1\alpha_3;\alpha_4\}_{(2,1)}\big]-\big[\{\alpha_2\alpha_3;\alpha_4\}_{(2,1)}\big]\nonumber\\
 \phantom{0=}{}+\big[\{\alpha_1;\alpha_4\}_{(2,1)}\big]+\big[\{\alpha_2;\alpha_4\}_{(2,1)}\big]+\big[\{\alpha_3;\alpha_4\}_{(2,1)}\big].
 \end{gather*}
\end{Corollary}

Both Corollaries \ref{cor: n-form for d=1} and~\ref{cor: n-form for d=2} give evidence for the following conjecture:

\begin{conje} \label{conje: symmetrized cycles are n-forms in arguments}
 Let $\lambda=(\lambda_1,\dots,\lambda_p)$ be a $p$-composition of $d \in \NN$ and
 \[\{\alpha_1,\dots,\alpha_{i-1},\beta_1, \dots, \beta_{\lambda_{i}+1},\alpha_{i+1}, \dots ,\alpha_p\} \subseteq \Pi.\]
 Then we have the following identity in $H_{2d-1}^1(\Pi)$:
 \begin{align*}
 0={}&\big[\{\alpha_1;\dots;\alpha_{i-1};\beta_1\dots\beta_{\lambda_{i}+1};\alpha_{i+1}; \dots ;\alpha_p\}_\lambda \big] \\
 &{}-\sum_{1 \leq j \leq \lambda_{i}+1} \, \big[\{\alpha_1;\dots;\alpha_{i-1};\beta_1\dots \hat{\beta}_j\dots \beta_{\lambda_{i}+1};\alpha_{i+1}; \dots ;\alpha_p\}_\lambda \big] \\
 &{}+\sum_{1\leq k < l \leq \lambda_{i}+1} \, \big[\{\alpha_1;\dots;\alpha_{i-1};\beta_1\dots \hat{\beta}_k \dots \hat{\beta}_l\dots \beta_{\lambda_{i}+1};\alpha_{i+1}; \dots ;\alpha_p\}_\lambda \big]\\
 &\qquad\qquad\vdots\\
 &{}+(-1)^{\lambda_i}\sum_{1\leq m \leq \lambda_{i}+1}\, \big[\{\alpha_1;\dots;\alpha_{i-1};\hat{\beta}_1\dots \beta_m\dots \hat{\beta}_{\lambda_{i}+1};\alpha_{i+1}; \dots ;\alpha_p\}_\lambda \big].
 \end{align*}
 Moreover, we have
 \begin{align*}
 \big[\{\alpha_1;\dots;\alpha_{i-1};\beta;\alpha_{i+1}; \dots;\alpha_p\}_\lambda \big]=(-1)^{\lambda_i}\big[\big\{\alpha_1;\dots;\alpha_{i-1};\beta^{-1};\alpha_{i+1}; \dots;\alpha_p\big\}_\lambda \big].
 \end{align*}
\end{conje}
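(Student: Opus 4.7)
The plan is to prove both identities by a double induction on $d$ and on the weights $\lambda_i$, using as the essential tool a general \emph{splitting formula} that generalises the basic identities appearing in the preceding lemmas. First I would establish, for every composition $\lambda$ of $d$ and every position $i$ with $\lambda_i = m$, the identity
$$[\{\ldots;\beta\gamma;\ldots\}_\lambda]
= [\{\ldots;\beta;\ldots\}_\lambda]
+ [\{\ldots;\gamma;\ldots\}_\lambda]
+ \sum_{j=1}^{m-1} [\{\ldots;\beta;\gamma;\ldots\}_{\lambda'_j}]$$
in $H^1_{2d-1}(\Pi)$, where $\lambda'_j$ is obtained from $\lambda$ by replacing its $i$-th part $m$ with two consecutive parts $j$ and $m-j$. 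The preceding lemmas verify this for $\lambda \in \{(2),(3),(2,1)\}$; the general case demands an explicit chain in $A^1_{2d}(\Pi)$ whose boundary yields the signed right-hand side. The small cases suggest that such a witness consists of a shuffle-type sum of $p$-tuples in which the $i$-th slot is expanded into $|_1$-symbols receiving $[\beta,\gamma]$, $[\beta\gamma]$, $[\beta]$, $[\gamma]$ in all combinations compatible with the decomposition $m=j+(m-j)$, together with a compensating level-zero chain of the form $[\alpha_1,\ldots,\beta,\gamma,\ldots,\alpha_p]$; verifying that $\partial$ produces the claimed relation is a long but tractable combinatorial check via formula \eqref{def: boundary operator of abelian chains}.

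Given the splitting formula, the first identity of the conjecture follows by induction on $m = \lambda_i$. Fix the other arguments and write $f(\beta):=[\{\ldots;\beta;\ldots\}_\lambda]$; the claim is that $f$ is a polynomial function of degree $\le m$ on $\Pi$, i.e.\ $\Delta_{\beta_1}\cdots\Delta_{\beta_{m+1}} f \equiv 0$ where $\Delta_\beta g(\gamma) := g(\beta\gamma)-g(\gamma)$, which evaluated at the identity is exactly the alternating sum in the conjecture. For $m=1$ the splitting formula reduces to $f(\beta\gamma)=f(\beta)+f(\gamma)$, so $f$ is linear. For $m\ge 2$ the splitting formula gives $\Delta_\beta f(\gamma) - f(\beta) = \sum_{j=1}^{m-1} G_j(\beta,\gamma)$ with $G_j(\beta,\gamma):=[\{\ldots;\beta;\gamma;\ldots\}_{\lambda'_j}]$, and the inductive hypothesis applied to the weight $m-j<m$ at the $(i+1)$-st slot of $\lambda'_j$ shows that $G_j(\beta,\gamma)$ is polynomial of degree $\le m-j$ in $\gamma$. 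Since $f(\beta)$ is constant in $\gamma$, the whole right-hand side has degree $\le m-1$ in $\gamma$, and $f$ has degree $\le m$ as required.

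The second identity does not follow from the splitting formula alone: substituting $\gamma=\beta^{-1}$ yields only $f(\beta)+f(\beta^{-1})+\sum_j G_j(\beta,\beta^{-1})=0$, which does not directly produce the sign $(-1)^m$. I would instead prove it by an explicit boundary construction analogous to those written down in the lemmas for $\lambda\in\{(2),(3)\}$. The general witness suggested by those cases is a signed alternating sum of $m$-tuples of the form $[\alpha^{-1}|\cdots|\alpha^{-1}|\alpha^{-1},\alpha|\alpha|\cdots|\alpha]$ in which the pair $[\alpha^{-1},\alpha]$ slides through every position, corrected by commutativity-level-zero terms involving the flat chain $[\alpha,\alpha^{-1},\alpha,\alpha^{-1},\ldots]$ of length $2m$. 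The multi-slot case is then obtained by applying the single-slot identity slotwise and using the splitting formula to propagate signs through the remaining arguments.

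The main obstacle I foresee is the first step: writing down and verifying in closed form the boundary witness for the splitting formula. Although the small cases exhibit a clear pattern, organising the shuffle signs and cancellations systematically is delicate, and the number of terms grows quickly with $d$ and the length of $\lambda$. A conceptually cleaner route would be to construct a chain-level analogue of a comultiplication sending $\{\ldots;\beta\gamma;\ldots\}_\lambda$ to a combination indexed by ordered splittings, and to prove that it is a chain map up to controlled error coming from lower commutativity levels; both the splitting formula and the inversion identity would then drop out as corollaries, and the polynomial-degree argument above would give the conjecture in full.
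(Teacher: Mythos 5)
The first thing to say is that the paper does not prove this statement: it is stated as Conjecture \ref{conje: symmetrized cycles are n-forms in arguments}, and the only support the authors offer is the explicit verification of the cases corresponding to $\lambda\in\{(2),(1,1)\}$ and $\lambda\in\{(3),(2,1),(1,1,1)\}$ (Corollaries \ref{cor: n-form for d=1} and \ref{cor: n-form for d=2}), each obtained by exhibiting an explicit element of $A^1_{2d}(\Pi)$ whose boundary is the desired relation. So there is no ``paper proof'' to match your argument against; you are proposing a strategy for an open problem.

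As a strategy, your reduction is sound but the essential content is missing. The polynomial-degree induction in your second paragraph is a correct formal argument: if the splitting formula holds, then $\Delta_\beta f(\gamma)-f(\beta)$ is a sum of terms of strictly smaller weight in $\gamma$, and iterating kills the $(\lambda_i+1)$-st difference. (Two small points you should not gloss over: the conjectured alternating sum omits the empty-product term, so you also need $[\{\dots;1;\dots\}_\lambda]=0$ in $H^1_{2d-1}(\Pi)$, which requires its own boundary witness in the unnormalized complex; and your induction is on the weight of a single slot while the ambient degree $2d-1$ stays fixed, so you must check the base of the induction at weight $1$ for every composition refining $\lambda$, not just for $\lambda$ itself.) The genuine gap is the splitting formula: for general $\lambda$ and general $i$ you have not written down the chain in $A^1_{2d}(\Pi)$ whose boundary realizes it, and you acknowledge as much. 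This is not a routine verification one can defer --- it is exactly the step where the paper's authors stop, after carrying it out by hand for $d\le 3$; the witnesses they exhibit (e.g.\ the eleven-term chain for $\{\alpha\beta\}_{(3)}$) already show that the combinatorics of the shuffle signs and the compensating level-zero terms is intricate and that no closed-form pattern has been extracted. The same applies to the inversion identity, where again you correctly observe that the splitting formula alone does not yield the sign $(-1)^{\lambda_i}$ and an independent boundary witness is needed. Until those chains are constructed and their boundaries computed, the argument is a plausible programme rather than a proof; your suggested chain-level comultiplication is, in my view, the most promising way to actually close it.
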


\begin{Remark}\label{rem: n-form}
 Let $A$ be an abelian group and let $\lambda=(\lambda_1,\dots,\lambda_p)$ be a $p$-composition of $m \in \NN$. Moreover, let $\theta \in A^{2d-1}_1(\Pi,A)$ be a cochain. We can define a map $\theta_\lambda\colon\Pi^{\times p} \to A$ by setting
 \begin{align*}
 \theta_\lambda(\alpha_1;\dots;\alpha_p):=\theta\big( \{\alpha_1;\dots;\alpha_p\}_\lambda\big).
 \end{align*}
 Now, we can reinterpret Conjecture \ref{conje: symmetrized cycles are n-forms in arguments} as follows: If $\theta$ is a cocycle, then $\theta_\lambda$ is a $\lambda_i$-form in the $i$th argument. Moreover, from Lemma \ref{eq: symmetric chains are cycles} we know that in this case $\theta_\lambda$ only depends on the cohomology class of $\theta \in Z^{2d-1}_1(\Pi,A)$.
\end{Remark}

\begin{Example} \label{ex: Rosso condition depends on Cohclass}
 Let $\KK$ be a field and $d,n \in \NN$. We set $\Pi=\ZZ^n$ and $A=\KK^\times$. Let $(\alpha_1,\dots, \alpha_n)$ be the standard basis of $\ZZ^n$. We are now in the setting of Definition \ref{def: defn of qk's}.

 Let $\theta \in Z^{2d-1}_1(\ZZ^n,\KK^\times)$ be a $(2d-1)$-cocycle. For $i_1, \dots, i_d \in \{1,\dots,n\}$, we define
 \begin{align*}
 q_{i_1,\dots,i_d}:= \theta([\alpha_{i_1}|\dots |\alpha_{i_d}]) \in \KK^\times.
 \end{align*}
 Then, for $\ell,j \in \{1, \dots,n\}$ and $k \in \{0,\dots,d\}$, we have
 \begin{align*}
 q_k = \theta_{(d-k,k)}(\alpha_\ell,\alpha_j).
 \end{align*}
 In particular, by the previous remark the Rosso condition $\tilde{R}_m=0$ only depends on the cohomology class $[\theta] \in H^{2d-1}_1\big(\ZZ^n,\KK^\times\big)$.
\end{Example}

\pdfbookmark[1]{References}{ref}
\LastPageEnding

\end{document}